\definecolor{darkred}{rgb}{1,0,0} 
\definecolor{darkgreen}{rgb}{0,0.8,0}
\definecolor{darkblue}{rgb}{0,0,1}
\newcommand{\labell}[1] {\label{#1}}
\newtheorem{thm}{Theorem}[section] 
\newtheorem{lem}{Lemma}[section] 
\newtheorem{prop}{Proposition}[section]
\newtheorem{cor}{Corollary}[section]
\theoremstyle{definition} 
\theoremstyle{remark} 
\newtheorem{rem}{Remark}
\chardef\csname pre amssym.def at\endcsname=\the\catcode`\@
\def\undefine#1{\let#1\undefined}
\def\newsymbol#1#2#3#4#5{\let\next@\relax
 \ifnum#2=\@ne\let\next@\msafam@\else
 \ifnum#2=\tw@\let\next@\msbfam@\fi\fi
 \mathchardef#1="#3\next@#4#5}
\def\mathhexbox@#1#2#3{\relax
 \ifmmode\mathpalette{}{\m@th\mathchar"#1#2#3}%
 \else\leavevmode\hbox{$\m@th\mathchar"#1#2#3$}\fi}
\def\hexnumber@#1{\ifcase#1 0\or 1\or 2\or 3\or 4\or 5\or 6\or 7\or 8\or
 9\or A\or B\or C\or D\or E\or F\fi}
\font\teneufm=eufm10
\font\seveneufm=eufm7
\font\fiveeufm=eufm5
\newcommand{\CR}{{\mathcal R}}
\newcommand{\Ff}{{\mathcal F}}
\def    \R      {{\mathbb R}}
\def    \Z      {{\mathbb Z}}
\def    \N      {{\mathbb N}}
\def    \T      {{\mathbb T}}
\def    \CP     {{\mathbb C}{\mathbb P}}
\def    \12    {{\frac{1}{2}}}
\def    \rk     {\operatorname{rk}}
\def    \HF     {\operatorname{HF}}
\def    \CF     {\operatorname{CF}}
\def    \Fix     {\operatorname{Fix}}
\def    \Per     {\operatorname{Per}}
\def    \va     {\vec{a}}
\begin{document}


\title[A symplectic proof of a theorem of Franks]{A symplectic proof of a theorem of Franks}

\author[B. Collier]{Brian Collier}
\email{collier3@illinois.edu}
\address{Department of Mathematics, University of Illinois at 
Urbana-Champaign, Urbana, IL 61801, USA}
\author[E. Kerman]{Ely Kerman}
\email{ekerman@illinois.edu}
\address{Department of Mathematics, University of Illinois at 
Urbana-Champaign, Urbana, IL 61801, USA}
\author[B. Reiniger ]{Benjamin M. Reiniger}
\email{reinige1@illinois.edu}
\address{Department of Mathematics, University of Illinois at 
Urbana-Champaign, Urbana, IL 61801, USA}
\author[B. Turmunkh]{Bolor Turmunkh}
\email{turmunk1@illinois.edu}
\address{Department of Mathematics, University of Illinois at 
Urbana-Champaign, Urbana, IL 61801, USA}
\author[A. Zimmer]{Andrew Zimmer}
\email{aazimmer@umich.edu}
\address{Department of Mathematics, University of Illinois at 
Urbana-Champaign, Urbana, IL 61801, USA}
\curraddr{Department of Mathematics, University of Michigan, Ann Arbor, MI 48109-1043, USA}

\shortauthors{B. Collier, E. Kerman, B.M. Reiniger, B. Turmunkh and A. Zimmer }

\classification{53D40, 37J45, 70H12}
\keywords{Periodic orbits, Hamiltonian flows, Floer homology}
\thanks{The authors acknowledge support from National Science Foundation grant DMS 08-38434 ÓEMSW21-MCTP: Research Experience for Graduate Students.}

\begin{abstract} 
A celebrated theorem in two-dimensional dynamics due to John Franks asserts that every area preserving homeomorphism of the sphere
has either two or infinitely many periodic points. In this work we reprove Franks'Ê theorem under the 
additional assumption that the map is smooth.  
Our proof uses only tools from symplectic topology and thus  differs significantly from previous proofs.  A crucial role is played by the results of Ginzburg and Kerman concerning resonance relations for Hamiltonian diffeomorphisms.

\end{abstract}

\maketitle

\section{Introduction}
\labell{sec:main-results}

Consider the unit sphere  $S^2 \subset \R^3$ equipped with the standard area form $\omega$ inherited from $\R^3$.
Let $\phi_{\alpha}$ be the rotation of the sphere by $2\pi \alpha$ radians  about the vertical axis.  Each $\phi_{\alpha}$ is an area preserving diffeomorphism
and there are two simple alternatives for the number of its periodic points: either $\alpha$ is irrational and  $\phi_{\alpha}$ has exactly two periodic points, the poles; or $\alpha$ is rational in which case some iterate of $\phi_{\alpha}$ is the identity and hence $\phi_{\alpha}$ has infinitely many periodic points. 
The following remarkable theorem due to  John Franks, \cite{f1, f2}, proves that these alternatives for the number of periodic points of area preserving maps of $S^2$ are universal.
\begin{thm}\label{franks}(\cite{f1,f2})
Every area preserving homeomorphism  of $S^2$  has either two or infinitely many periodic points.
\end{thm}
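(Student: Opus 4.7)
The plan is to split the proof into two parts: (1) handle the smooth case using symplectic topology, and (2) reduce the case of an arbitrary area-preserving homeomorphism to the smooth case by $C^0$-approximation combined with persistence of periodic points.

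For the smooth case, suppose $\phi$ is an area-preserving diffeomorphism of $S^2$ with at least three but only finitely many periodic points; I will aim for a contradiction. Since $H^1(S^2;\R)=0$ the flux homomorphism vanishes, so every area-preserving $\phi$ is in fact Hamiltonian, say $\phi=\phi^1_H$. Passing to an iterate, we may assume that $\phi$ has three distinct fixed points. The filtered Hamiltonian Floer homology $\HF_*(\phi)$ recovers $H_*(S^2)$, so the spectral invariants $\cf([S^2],H)$ and $\cf([\pt],H)$ distinguish two fixed points and we set these aside. Under the finiteness hypothesis, the Ginzburg-Kerman resonance relation forces the mean Conley-Zehnder indices of all periodic orbits to lie in a single rational line. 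Combining this with the index iteration formula, monotonicity of $S^2$ (positivity of $c_1$ on spheres), and an analysis of the local Floer homology at each isolated periodic point, the existence of a third periodic orbit is incompatible with finiteness and yields infinitely many periodic orbits. This establishes Theorem \ref{franks} in the smooth category.

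For the general homeomorphism case, approximate $\phi$ in the $C^0$-topology by a sequence of smooth area-preserving diffeomorphisms $\phi_n\to\phi$; such approximations exist by classical smoothing results for measure-preserving surface homeomorphisms. Suppose $\phi$ has finitely many periodic points $p_1,\dots,p_k$ with $3\le k<\infty$ and all periods dividing some integer $N$. Using $C^0$-persistence of fixed-point index at isolated periodic orbits (from Le Calvez-type equivariant Brouwer foliations on surfaces), each $\phi_n$ carries at least three periodic points of period $\le N$ for $n$ large, so by the smooth case each $\phi_n$ has infinitely many periodic points. To transfer this to $\phi$ itself, invoke $C^0$-continuity of Hamiltonian spectral invariants on $S^2$ (Viterbo, Seyfaddini): the spectral invariants of $\phi_n$ converge to those of $\phi$, producing action values of $\phi$ not accounted for by $p_1,\dots,p_k$. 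Cluster points of orbits realizing these actions give genuine periodic points of $\phi$ beyond the assumed finite list, a contradiction.

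The principal obstacle, and the part of this plan that lies outside the symplectic technology featured in the paper, is the smooth-to-continuous passage. Floer homology is inherently defined for smooth Hamiltonian diffeomorphisms, and under arbitrary $C^0$-perturbations periodic points can be created or destroyed; a naive approximation argument therefore fails. The proposed remedy is to combine two external ingredients with the smooth Floer core of step one: the $C^0$-continuity of spectral invariants on $S^2$, which transports quantitative action information from $\phi_n$ to $\phi$, and two-dimensional topological dynamics (prime ends, Brouwer translation, Le Calvez foliations), which converts that action information into topological periodic points of the limiting homeomorphism. Fusing these inputs with the smooth result yields a proof of Theorem \ref{franks} in its stated generality.
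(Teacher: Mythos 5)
There is a genuine gap, and it is exactly where you flag it. First, note that the paper does not actually prove Theorem \ref{franks} as stated: that theorem is cited from \cite{f1,f2}, and what the paper proves is Theorem \ref{main}, which recovers Franks' theorem only for \emph{smooth} (indeed Hamiltonian) maps. Your first step is therefore in the same spirit as the paper --- on $S^2$ the flux obstruction vanishes, so orientation-preserving area-preserving diffeomorphisms are Hamiltonian, and the Ginzburg--Kerman resonance relations are the paper's key input too --- but your sketch of the smooth case stops short of the actual mechanism. The resonance relations only produce two fixed points $P,Q$ with irrational mean index; ruling out a third periodic point with integer mean index is the hard part, and the paper does it by blowing up a high iterate of $\phi$ at two points, gluing two copies of the resulting annulus map into a symplectomorphism of $\T^2$ (Arnold's construction), and contradicting the dichotomy for the Floer homology of torus symplectomorphisms (Proposition \ref{hom} versus Propositions \ref{not} and \ref{endpath2}). ``An analysis of the local Floer homology at each isolated periodic point'' is not a substitute for this; spectral invariants and local Floer homology alone do not obviously exclude a configuration such as two irrational elliptic points plus one nondegenerate point of mean index $0 \bmod 4$.

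The smooth-to-continuous passage is the more serious problem and does not work as described. The direction you can control is the wrong one: if $\phi_n\to\phi$ in $C^0$ and each $\phi_n$ has infinitely many periodic points, those points have unbounded periods, and a limit of $\phi_n$-periodic points of period $N_n\to\infty$ carries no information about $\Per(\phi)$; conversely, nothing forces $\phi_n$ to inherit the (assumed finite) periodic point set of $\phi$ beyond index-persistence for bounded periods. Your proposed repair via $C^0$-continuity of spectral invariants requires (a) that the spectral invariants of the limiting \emph{homeomorphism} are defined, (b) that the limiting action values are realized by honest periodic points of $\phi$ rather than merely lying in the closure of the action spectra of the $\phi_n$, and (c) a uniform separation between these values and the finitely many actions of $p_1,\dots,p_k$ and their iterates. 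None of these is routine; (b) in particular is a $C^0$-rigidity statement of the same depth as the theorem you are trying to prove, and ``cluster points of orbits realizing these actions'' need not be periodic for $\phi$ when the realizing orbits have periods tending to infinity. As written, step two is a research programme rather than a proof, which is presumably why the paper restricts itself to the smooth category and leaves the topological case to Franks' original arguments.
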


In the case of smooth maps this theorem was strengthened by Franks and Handel in \cite{fh} with the addition of new information on the growth rate of periodic points. The smoothness condition in \cite{fh} was then relaxed by Le Calvez in \cite{lc}.
As stated and proved, these results all belong to the world of  two-dimensional dynamical systems. In particular, all the previous proofs  known to the present authors utilize results such as Brouwer's translation theorem which capture  phenomena unique to dimension two. On the other hand, Franks' theorem (in the smooth category),  and the results in \cite{fh} can be recast as statements about Hamiltonian diffeomorphisms of $S^2$. From this perspective they can viewed as 
the two-dimensional models of a more general class of  results that are expected to hold for Hamiltonian diffeomorphisms of large families of  symplectic manifolds (see below).  

A first step in the process of absorbing Franks' theorem into symplectic topology is to reprove it using only the tools from this field.  This is the goal of the present paper. Other {\it symplectic} approaches to a similar set of results concerning area preserving disc maps  have been developed by  Bramham (see \cite{bh}) and by  Ghrist,  Van den Berg, Vandervorst and W\'ojcik in \cite{gvvw}.   

Here, we use some well-known symplectic tools, as well as the results on resonance relations for Hamiltonian diffeomorphisms from \cite{gk}, to prove the following.

\begin{thm} \label{main}
Every Hamiltonian diffeomorphism $\phi$ of $(S^2, \omega)$ has either two or  infinitely many periodic points. 
If $\phi$ has  exactly two periodic points, $P$ and $Q$, then both  are nondegenerate. In particular both are elliptic fixed points of $\phi$,  and their  mean indices, $\Delta(P)$, $\Delta(Q) \in \R / 4\Z$,  are irrational and satisfy $$\Delta(P)+ \Delta(Q) =0 \mod 4.$$  
\end{thm}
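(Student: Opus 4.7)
The plan is to suppose $\phi$ has only finitely many periodic points and then extract every property stated in the theorem. Because $S^2$ is simply connected, every periodic orbit is contractible, and by passing from $\phi$ to a suitable iterate $\phi^k$ we may arrange that all periodic points of $\phi$ are themselves fixed points. The natural framework is monotone Floer theory on $(S^2,\omega)$: the Novikov ring is $\mathbb{Q}[q,q^{-1}]$ with $|q|=4$, and $\HF_*(\phi)\cong H_*(S^2;\mathbb{Q})\otimes\mathbb{Q}[q,q^{-1}]$ is concentrated in even degrees modulo $4$. To each isolated fixed point $x$ I will attach its mean index $\Delta(x)\in\R/4\Z$ and its local Floer homology $\HFL(x)$, and work with the invariants of $x$ under all iterations $\phi^k$.

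The first substantive step is a rank/Euler-characteristic count. Summing $\HFL$ over fixed points in each degree (mod $4$) must produce $\HF_*(\phi)$, and in particular the total Euler characteristic is $\chi(S^2)=2$. Since for a nondegenerate fixed point on $S^2$ the Conley--Zehnder index is $\pm 1$ (elliptic) or $0$ (hyperbolic), this already forces at least two fixed points and, in the case of exactly two nondegenerate fixed points, forces both to be elliptic with $\CZ=+1$ and $\CZ=-1$. I would separate the discussion into the nondegenerate situation and the genuinely degenerate situation; in the latter one uses the local-to-global spectral sequence of $\HFL$ together with the fact that a fixed point with rich $\HFL$ must, upon small perturbation, split into several nondegenerate fixed points whose mean indices cluster around $\Delta(x)$.

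Next I bring in the Ginzburg--Kerman resonance theorem: whenever a Hamiltonian diffeomorphism of a closed monotone symplectic manifold has only finitely many periodic points, the mean indices of those periodic points satisfy a nontrivial integral resonance relation. Specialized to $S^2$ and combined with the degree/index balance from the Floer count, this produces for the two-fixed-point case the relation $\Delta(P)+\Delta(Q)\equiv 0\pmod 4$. Irrationality of the individual $\Delta(P),\Delta(Q)$ would then be forced by the following dichotomy, which I would prove using persistence of $\HFL$ under iteration: a rational mean index $\Delta(x)=4p/q$ makes the fixed point $x$ ``resonant'' for $\phi^q$, and in this situation either $x$ produces nearby new periodic orbits via a standard bifurcation/continuation argument in local Floer homology, or the local Floer homology of $\phi^q$ at $x$ becomes incompatible with the global rank constraint. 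Either alternative contradicts the assumption that $\phi$ has only finitely many periodic points.

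The main obstacle I expect is ruling out the case of three or more isolated fixed points with only finitely many periodic points overall. The naive Euler-characteristic count allows arbitrarily many fixed points (provided $\HFL$-degrees balance), so one must genuinely exploit the Ginzburg--Kerman resonance relations together with their iterated versions. The strategy is to show that with $k\geq 3$ fixed points the resonance relation, combined with the $\R/4\Z$-constraint on mean indices and the persistence of $\HFL$ under all $\phi^m$, forces some $\Delta(x_i)$ to be rational; then the argument in the previous paragraph produces new periodic points. Carrying this out rigorously -- in particular, controlling how $\HFL(x,\phi^m)$ varies with $m$ and matching it against the global Floer homology of $\phi^m$ in every degree -- is the technical heart of the proof, and it is precisely here that the tools developed in \cite{gk} replace the two-dimensional arguments (Brouwer translation, prime ends) used in the original proofs of Franks.
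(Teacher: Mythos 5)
Your opening moves (passing to an iterate so that all periodic points are fixed points, and invoking the resonance relations of \cite{gk} to produce points with irrational mean index and, in the two-point case, the relation $\Delta(P)+\Delta(Q)=0 \mod 4$) coincide with the paper's Corollary \ref{two}. The genuine gap is at what you yourself defer as ``the technical heart'': excluding an additional fixed point whose mean index is an integer (after iteration, $\equiv 0 \mod 4$). Your proposed mechanism --- that a rational mean index forces either a bifurcation creating new orbits or an incompatibility between local Floer homology and the global rank of $\HF(\phi^q)$ --- breaks down on the simplest dangerous configuration: three elliptic fixed points with irrational mean indices plus one nondegenerate hyperbolic fixed point. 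This configuration passes the Lefschetz count ($1+1+1-1=2$); the hyperbolic point has mean index $0$ and local Floer homology of rank one in degree $0$ for \emph{every} iteration (it neither bifurcates nor degenerates, and all its iterations are admissible); and the global Floer homology of $S^2$, of total rank two in odd degrees, is perfectly consistent with a chain complex in which the hyperbolic generator cancels one elliptic generator. So neither alternative of your dichotomy is triggered, and the resonance relations do not force any of the irrational indices to become rational, so your reduction to the rational case never engages. You would also need a separate treatment of a \emph{degenerate} fixed point with mean index $\equiv 0 \mod 4$, which your sketch does not address.

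The paper's proof supplies exactly the missing mechanism, and it is not an intrinsic $S^2$-Floer argument: assuming a fixed point $R$ with $\Delta(R)=0 \mod 4$ exists, it blows up a large iterate of $\phi$ at two fixed points, glues two copies of the resulting closed cylinder into a torus (Arnold's construction from \cite{ar}), and then plays two facts against each other: the flux dichotomy for symplectomorphisms of $(\T^2,\Omega)$ (Proposition \ref{hom}: either the map is Hamiltonian and $\HF_*$ is $\mathrm{H}_{*+1}(\T^2;\Z)$, or $\HF$ vanishes) and an index-separation estimate for large iterates showing that a distinguished fixed point carries a nontrivial Floer class in a forbidden degree, or that no contractible fixed point has Conley--Zehnder index one (Propositions \ref{not} and \ref{endpath2}); the degenerate case further requires the perturbation Lemma \ref{k-pert}. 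If you want to avoid the transfer to the torus, you need a substitute for this dichotomy; local Floer homology together with the resonance relations alone does not provide one.
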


\noindent We defer a discussion of the mean index to Section \ref{meanindex}.

Theorem \ref{main} implies Theorem \ref{franks} in the 
smooth category. For the case of Hamiltonian diffeomorphisms with exactly two periodic points, the restrictions on these points included in the statement of Theorem \ref{main} are not new. As pointed out to us by Bramham, nondegeneracy follows from the results on area preserving homeomorphisms of annuli in \cite{f1,f2},  and the restrictions on the mean indices can be derived  from the Lefschetz fixed point theorem and the Poincar\'{e}-Birkhoff theorem. However, we establish these restrictions by other means which we hope will lead to analogous restrictions in some of the generalizations of Franks' theorem that are expected to hold in higher dimensional settings.  
Perhaps the best known (and most approachable) of these conjectured generalizations is the assertion that every Hamiltonian  diffeomorphism  of $\CP^n$ must have either $n+1$ or infinitely many periodic points  (see, for example, page 263 of \cite{hz}). 
Applications of the ideas developed here to such problems will be considered elsewhere. 

\begin{rem} Much is known about the set of Hamiltonian diffeomorphisms (in fact homeomorphisms) of $S^2$ with exactly two periodic points. In \cite{lc-icm} it is shown that,  up to conjugacy, every such Hamiltonian diffeomorphism (in fact homeomorphism) is the compactification of an irrational pseudo-rotation, an area preserving map of the closed annulus such that every positively recurrent point has the same irrational rotation number. On the other hand, such maps are known to exhibit a variety of different behaviors; from simple irrational rotations, to the smooth examples from \cite{ak, fh, fk}  which have only three ergodic invariant measures.
\end{rem}

\subsection{On the proof of Theorem \ref{main}}

The crucial  first step is to prove that if a Hamiltonian diffeomorphism $\phi$ of $S^2$ has finitely many periodic points, then at least two of these points, say $P$ and $Q$, must  have irrational mean indices.  
This is established as an essentially immediate implication of the theory of resonance relations for Hamiltonian diffeomorphisms developed in \cite{gk}. It is important to note that these results
from \cite{gk} are themselves implied by the ideas inherent in the recent proofs of the Conley Conjecture by Hingston in \cite{Hi} and Ginzburg in \cite{Gi:conley}, and the applications and refinements of these ideas 
developed by Ginzburg and G\"urel in \cite{gg1,gg2}. With $P$ and $Q$ in hand it is then easy to show that in order to prove Theorem \ref{main} it suffices to show that $\phi$ can not have another periodic point, say $R$, with an integer mean index.
Assuming the  existence of such an $R$, in two distinct cases, we blow up a suitable iteration of $\phi$ at two points,  and glue the resulting map to itself to obtain an area preserving diffeomorphism of the torus (following Arnold). Using index relations and the Floer theory of symplectic diffeomorphisms we then prove that the resulting maps can not exist.

\subsection{Acknowledgments} The second author is grateful to Viktor Ginzburg for 
many valuable comments and suggestions. He also wishes to thank Barney Bramham for
his generous and illuminating comments concerning an earlier version of this work.

\section{Background material, definitions and conventions}

\subsection{Symplectic isotopies and Hamiltonian diffeomorphisms} 
Let $(M, \omega)$  be a closed symplectic manifold of dimension $2n$ and minimal Chern number $N$. Our basic object of study will be a smooth isotopy $\psi_t$ 
of symplectic diffeomorphisms of $(M, \omega)$, where $t$ takes values in $[0,1]$ and $\psi_0$ is the identity map. 
In particular, we will be interested in the periodic points of $\psi_1$. Denoting the set of fixed points of $\psi_1$  by $\Fix(\psi_1)$, 
the set of periodic points of $\psi_1$ is defined as $$\Per(\psi_1) = \bigcup_{k \in \N} \Fix((\psi_1)^k).$$ 
The period of a point $X \in \Per(\psi_1)$ is defined to be the smallest positive integer $k$ for which $X \in \Fix((\psi_1)^k)$. 
We will also associate to each periodic point $X$ of $\psi_1$ with period $k$ the unique element of $\pi_1(M)$ or $\mathrm{H}_1(M;\Z)$ represented by the closed loop $t \mapsto (\psi_t)^k(X).$

To facilitate our study of periodic points, we will assume from now on that the time-dependent vector field $X_t$ generating our symplectic isotopy $\psi_t$ extends to a
smooth time-periodic vector field of period one. This imposes no new restrictions as any symplectic isotopy is homotopic, relative its endpoints, to one with this property. (In particular, 
$\psi_t$ is homotopic to $\psi_{\zeta(t)}$ where $\zeta \colon [0,1] \to [0,1]$ is smooth, nondecreasing, onto and constant near $0$ and $1$.) This assumption allows us to 
define  $\psi_t$ for all $t \in \R$ and to identify $\psi_{k}$ with $(\psi_1)^k$.

The subset of symplectic isotopies we are most interested in are those corresponding to Hamiltonian flows.
 A Hamiltonian on $(M, \omega)$ is a function $H\colon \R/\Z\times M\to \R$, or equivalently 
a smooth one-periodic family of functions $H_t(\cdot) = H(t, \cdot)$. Each Hamiltonian determines a one-periodic
 vector field $X_H$ on $M$ via the equation
$i_{X_H} \omega = -dH_t$. The time-$t$ flow of $X_H$, denoted by $\phi^t_H$, is defined for all $t \in \R$.
For $t \in [0,1]$, $\phi^t_H$ is a smooth isotopy of symplectic diffeomorphisms.
The set of Hamiltonian diffeomorphisms of $(M, \omega)$ consists of all the time one maps $\phi=\phi^1_H$ 
of Hamiltonian flows.

\subsection{The Conley-Zehnder and mean indices}\label{meanindex}

Let $A\colon [0,1] \to Sp(n)$ be a continuous path in the group $Sp(n)$ of $2n \times 2n$ symplectic matrices such that $A(0)$ is the identity matrix. 
One can associate to $A$ its Conley-Zehnder index $\mu(A) \in \Z$ 
as defined in \cite{cz3}, and its  mean index $\Delta(A) \in \R$ as defined in \cite{SZ}.  
As shown in \cite{SZ}, these indices satisfy the inequality
\begin{equation}
\label{close}
|\mu(A) - \Delta(A)| \leq n, 
\end{equation}
where the strict form of the inequality holds if $A(1)$ has at least one eigenvalue different from $1$.

Consider a smooth isotopy $\psi_t$ of symplectic diffeomorphisms as above. Let  $X$ be a fixed point of $\psi_1$ and let  $x \colon [0,1] \to M$
be the closed curve $\psi_t(X)$. Given a symplectic trivialization $\xi$ of $x^*TM$, the linearized flow of $\psi_t$ along $x(t)$ yields a smooth path $A_{\xi}\colon [0,1] \to Sp(n)$ 
starting at the  identity matrix. The quantities $\mu(A_{\xi})$ and $\Delta(A_{\xi})$ depend only on the homotopy class of the symplectic trivialization $\xi$. We denote this class by $[\xi]$
and define the Conley-Zehnder and mean index of $X$ with respect to this choice as
\begin{equation*}
\label{ }
\mu(X;\psi_t, [\xi]) = \mu(A_{\xi})
\end{equation*} 
 and 
\begin{equation*}
\label{ }
\Delta(X;\psi_t, [\xi]) = \Delta(A_{\xi}).\footnote{The symplectic isotopy is included in this notation because we will need to consider fixed points shared by different symplectic diffeomorphisms.} 
\end{equation*}
In this context, inequality \eqref{close} becomes
\begin{equation}
\label{closer}
|\mu(X;\psi_t, [\xi]) - \Delta(X;\psi_t, [\xi])| \leq n,
\end{equation}
where the strict form of the inequality holds if the linearization of $\psi_1$ at $X$, $D(\psi_1)_X$, has at least one eigenvalue different from $1$.

\subsubsection{Iteration formula}\label{iteration}

Each fixed point $X$ of $\psi_1$ is also a fixed point of the $k$-th iteration $(\psi_1)^k =\psi_k$. 
As shown in \cite{SZ}, the mean index grows linearly under iteration, i.e., 
\begin{equation}
\label{iter}
\Delta(X; \psi_{tk}, [\xi^k]) = k \Delta(X;\psi_t, [\xi]),
\end{equation}
where $\xi^k$ is the trivialization of $TM$ along $\psi_{tk}(X)$ induced by $\xi$.

\subsubsection{Continuity}\label{continuity}
We now recall a  continuity property of the mean index  established in \cite{SZ}. To do so we first note that if two fixed points $X$ and $X'$, of possibly different maps $\psi_1$ and $\psi'_1$,  represent the same homotopy class $c \in \pi_1(M)$ then we can specify a unique class of symplectic trivializations along both their trajectories by choosing a homotopy class of symplectic trivializations of $z^*(TM)$ where $z \colon S^1 \to M$ is any smooth representative of $c$ (in \cite{BU} such a choice is referred to as a $c$-structure.) In particular,  a choice of $[\xi]$ for $X$ determines a unique class of symplectic trivializations for $X'$ which we still denote by $[\xi]$. When we compare indices of fixed points  in the same homology class we will always assume that the classes of trivializations being used  are coupled in this manner.

Now let $\widetilde{\psi}_t$ be a symplectic isotopy $C^1$-close $\psi_t$. 
Under this perturbation, each fixed point $X$ of $\psi_1$ splits into a collection of fixed points of $\widetilde{\psi}_1$ which are close to $X$ (and hence in the same homotopy class as $X$). 
If $\widetilde{X}$ is one of these  fixed points of $\widetilde{\psi}_1$ then $$|\Delta(X; \psi_t, [\xi]) - \Delta(\widetilde{X}; \widetilde{\psi}_t, [\xi])|$$ is small.

\subsubsection{A useful fact in dimension two}

The following result is a simple consequence of the definition of the indices. It can be derived, for example,  from Theorem 7 in Chapter 8 of \cite{lo}. 

\begin{lem}\label{determine}
 Let $(M,\omega)$ be a  two-dimensional symplectic manifold  and suppose that  $\psi_t$ is an isotopy of symplectic diffeomorphisms of $(M,\omega)$ starting at the identity. If $X$ is a 
fixed point of $\psi_1$ and $\Delta(X;\psi_t, [\xi])$ is not an integer, then $\mu(X;\psi_t, [\xi])$ is the odd integer closest to $\Delta(X;\psi_t, [\xi])$. 
\end{lem}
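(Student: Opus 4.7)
Set $A(t) := A_\xi(t)$, so $A(t) \in Sp(1) = SL(2,\R)$ with $A(0) = I$, and write $\mu := \mu(A)$, $\Delta := \Delta(A)$. Since $n=1$, inequality \eqref{closer} reads $|\mu - \Delta| \leq 1$, with strict inequality whenever $A(1)$ has no eigenvalue equal to $1$. The plan is to use the hypothesis $\Delta \notin \Z$ to pin down the conjugacy class of $A(1)$ in $SL(2,\R)$, and then to recover $\mu$ from this type information together with \eqref{closer}.

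First I would prove the key claim: \emph{$\Delta \notin \Z$ forces $A(1)$ to be elliptic with rotation angle $\theta \in (0,2\pi) \setminus \{\pi\}$.} The tool is the polar decomposition $A(t) = O(t) P(t)$, where $O(t) \in SO(2)$ and $P(t)$ is symmetric positive definite; deforming $P(t)$ through symmetric positive definite matrices to the identity realizes $\Delta(A)$ as twice the winding of the continuous lift of $O(t)$ to the universal cover $\R$ of $SO(2)$. A conjugacy class check in $SL(2,\R)$ shows that whenever $A(1)$ is hyperbolic (positive or negative), equal to $\pm I$, or a unipotent shear of either sign, the rotation part of $A(1)$ is $\pm I$ and the lifted winding is a half-integer; so $\Delta \in \Z$ in each of these cases. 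The remaining possibility is $A(1)$ elliptic with $\theta \neq \pi$, for which $\Delta = 2k + \theta/\pi$ is non-integer.

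With the elliptic type established, I would combine two standard facts. Since the eigenvalues $e^{\pm i\theta}$ of $A(1)$ both differ from $1$, the strict form of \eqref{closer} yields $|\mu - \Delta| < 1$. And the classification of Conley--Zehnder indices in $Sp(1)$ (for instance via Theorem 7 in Chapter 8 of \cite{lo}) ensures that elliptic paths have $\mu \in 2\Z + 1$. Therefore $\mu$ is an odd integer in the open interval $(\Delta - 1, \Delta + 1)$. Since $\Delta \notin \Z$ this interval contains a unique odd integer, which is by definition the odd integer closest to $\Delta$, and $\mu$ must equal it.

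The hard part is the first step: the conjugacy class case analysis showing that every non-elliptic endpoint in $SL(2,\R)$ produces an integer mean index. This requires unpacking the polar decomposition description of $\Delta$ from \cite{SZ} and some care with the degenerate cases $A(1) = I$ and the unipotent shears, where $\mu$ itself is not defined but $\Delta$ still is. The remaining ingredients -- the parity statement for elliptic paths, the strict inequality, and the final arithmetic -- are routine.
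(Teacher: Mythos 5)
The paper does not actually prove this lemma; it simply cites Theorem 7 in Chapter 8 of \cite{lo} and calls the statement a consequence of the definitions, so your attempt must be judged on its own. The second half of your argument is fine: an elliptic endpoint has no eigenvalue equal to $1$, so the strict form of \eqref{closer} gives $|\mu-\Delta|<1$; ellipticity forces $\mu$ odd (the parity of $\mu$ is governed by $\operatorname{sign}\det(I-A(1))$, which is positive for elliptic matrices); and since $\Delta\notin\Z$ the interval $(\Delta-1,\Delta+1)$ contains exactly one odd integer, namely the odd integer closest to $\Delta$.

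The gap is in your key first step. The mean index of \cite{SZ} is not the winding of the orthogonal factor in the polar decomposition: it is $\Delta=\alpha(1)/\pi$, where $e^{i\alpha(t)}=\rho(A(t))$ and $\rho$ is the Salamon--Zehnder rotation function, which agrees with $\det_{\C}$ on $\U(n)$ and is \emph{homotopic to}, but not pointwise equal to, $\det_{\C}\circ(\text{unitary part})$. Your assertion that ``the rotation part of $A(1)$ is $\pm I$'' whenever $A(1)$ is hyperbolic or a shear is false; the orthogonal polar factor is not a conjugacy invariant (conjugation does not commute with polar decomposition), so no conjugacy class check can determine it, and indeed the shear $\left(\begin{smallmatrix}1&1\\0&1\end{smallmatrix}\right)$ and the hyperbolic matrix $R_\theta\,\mathrm{diag}(2,1/2)$ for small $\theta\neq 0$ both have nontrivial rotation parts. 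For such endpoints the polar winding is not a half-integer, so your computation does not show $\Delta\in\Z$. The statement you actually need --- and which is true essentially by definition of $\rho$ --- is that $\rho\equiv 1$ on matrices with real positive spectrum and $\rho\equiv -1$ on matrices with real negative spectrum; hence a non-elliptic endpoint forces $\rho(A(1))=\pm1$, so $\alpha(1)\in\pi\Z$ and $\Delta\in\Z$. Replacing the polar-decomposition winding by the function $\rho$ repairs the argument, and everything downstream then goes through.
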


\subsubsection{Indices of contractible fixed points modulo $2N$.} \label{indexmod}
When $X$ is a contractible fixed point of $\psi_t$, that is $x(t)=\psi_t(X)$ is contractible,
it is often useful to restrict attention to  trivializations of $x^*TM$ determined by a choice of smooth  spanning disc $u \colon \mathbb{D}^2 \to M$
with $u(e^{2\pi i t}) = x(t)$. For such choices of trivializations the corresponding  indices are well-defined
modulo twice the minimal Chern number, $2N$. In fact, the corresponding elements of $\R/2N\Z$ depend only on the time one map $\psi_1$ and hence will be denoted by $\mu(X)$ and $\Delta(X)$. 
The quantities $\Delta(P)$ and $\Delta(Q)$ appearing in the statement of Theorem \ref{main}
are meant to be understood in this way. 

\subsection{Floer homology for symplectic diffeomorphisms of the torus}\label{floerhomology}
Finally, we recall  the properties of the Floer homology of symplectic diffeomorphisms  required
for the proof of Theorem \ref{main}. We will only need to consider  the special case when $(M,\omega)$ is a two-dimensional symplectic torus $(\T^2, \Omega)$ and the symplectic diffeomorphism is isotopic to the identity.
Consider then  a smooth isotopy  $\psi_t$ of symplectic diffeomorphisms of $(\T^2, \Omega)$ starting at the identity such that the fixed points of $\psi_1$ are all nondegenerate. 
The Floer homology of $\psi_1$, $\HF(\psi_1)$, is then well-defined and has the properties described below. The reader is referred  to  \cite{ds, se1, se2, se3, va} for more details on the general construction  of this Floer homology,  and to \cite{c1,c2}  for more thorough reviews of the Floer theory of symplectic diffeomorphisms of surfaces.

\medskip

\noindent{\bf Invariance under Hamiltonian isotopy:} If $\phi$ is a Hamiltonian diffeomorphism of  $(\T^2, \Omega)$
  then \begin{equation*}
\label{ }
\HF(\psi_1 \phi)= \HF(\psi_1).
\end{equation*}
  
  \medskip

\noindent{\bf Splitting:} The Floer homology $\HF(\psi_1)$ admits a decomposition of the form 
\begin{equation*}
\label{ }
 \HF(\psi_1) = \bigoplus_{c \in \mathrm{H}_1(\T^2;\Z)} \HF(\psi_1; c).
\end{equation*}
Here, each summand $\HF(\psi_1; c)$ is the homology of a chain complex
$(\CF(\psi_1; c), \partial_J)$ where the chain group $\CF(\psi_1; c)$ is a torsion-free module over a suitable Novikov ring, and the rank of this module is the number of  fixed points  of $\psi_1$ which represent the class $c$.  The group $\HF(\psi_1; 0)$ coincides with the Floer-Novikov Homology constructed by L{\^e} and Ono in \cite{va}, (\cite{se3}).  Moreover, if $\psi_t$ is a Hamiltonian isotopy then $\HF(\psi_1) = \HF(\psi_1; 0)$ and is equal to the usual Hamiltonian Floer homology of $(\T^2, \Omega)$, \cite{fl}. In particular, it is canonically isomorphic to $\mathrm{H}(\T^2;\Z)$.
 \medskip

\noindent{\bf Grading:}  Each chain complex $(\CF(\psi_1; c), \partial_J)$ above has a relative $\Z$-grading and the boundary operator decreases degrees by one. For the case $c=0$, the grading 
can be set by using the usual Conley-Zehnder index  of contractible fixed points (which is well-defined since $c_1(\T^2) = 0$.) In particular,   if $\psi_t$ is a Hamiltonian isotopy we have  
\begin{equation}
\label{hfham}
\HF_*(\psi_1; 0) = \mathrm{H}_{*+1}(\T^2;\Z).
\end{equation}
For a general class $c \in  \mathrm{H}_1(\T^2;\Z)$ the (relative) grading of $(\CF_*(\psi_1; c), \partial_J)$ is again determined by the Conley-Zehnder index and the overall shift can be fixed by choosing a homotopy class of symplectic trivializations of $z^*(T\T^2)$ where $z \colon S^1 \to \T^2$ is a smooth representative of $c$.

\medskip

\noindent{\bf Extension to all smooth isotopies:}
The property of invariance under Hamiltonian isotopy allows one to also define the Floer homology for any  smooth symplectic isotopy $\widetilde{\psi}_t$ of $(\T^2, \Omega).$ 
One simply  sets 
\begin{equation*}
\label{ }
\HF_*(\widetilde{\psi}_1) = \HF_*(\widetilde{\psi}_1 \circ \phi) 
\end{equation*}
where $\phi$ is a Hamiltonian diffeomorphism for which the fixed points of  $\widetilde{\psi}_1 \circ \phi$ are nondegenerate. For example, if $\widetilde{\psi}_t = id$ for all $t \in [0,1]$ we can perturb by the Hamiltonian flow of a $C^2$-small Morse function to
obtain
\begin{equation}
\label{hfidentity}
\HF(id) = \HF(id;0) = \mathrm{H}(\T^2;\Z).
\end{equation}

\medskip

\noindent{\bf Dichotomy:} Finally we recall the following well known alternative for the Floer homology whose proof we include for the sake of completeness.

\begin{prop}\label{hom}
Either $\psi_1$ is a Hamiltonian diffeomorphism in which case $\HF(\psi_1) = \HF(\psi_1; 0)$ and
 $\HF_*(\psi_1; 0)= \mathrm{H}_{*+1}(\T^2;\Z)$, or 
$\HF(\psi_1)$ is trivial.
\end{prop}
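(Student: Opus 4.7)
My plan is to exploit the flux homomorphism together with the invariance of $\HF_*$ under Hamiltonian composition to reduce the computation of $\HF_*(\psi_1)$ to two model cases.

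First I would invoke the classical fact that on $(\T^2,\Omega)$ the group $\mathrm{Symp}_0(\T^2,\Omega)/\Ham(\T^2,\Omega)$ is isomorphic, via the flux homomorphism, to $\mathrm{H}^1(\T^2;\R)/\Gamma$, where the flux group $\Gamma$ is the full integer lattice. The quotient is therefore itself a $2$-torus, and every class is represented by an ordinary translation of $\T^2$. This allows me to factor
\begin{equation*}
\psi_1 = T_v \circ \phi,
\end{equation*}
where $T_v$ is translation by some $v \in \T^2$ and $\phi \in \Ham(\T^2,\Omega)$. The diffeomorphism $\psi_1$ is Hamiltonian precisely when $v = 0$.

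If $v = 0$, so that $\psi_1 \in \Ham(\T^2,\Omega)$, I would use the invariance under Hamiltonian isotopy to replace $\psi_t$ by a Hamiltonian isotopy from $\id$ to $\psi_1$ without altering $\HF_*(\psi_1)$ or its decomposition by $\mathrm{H}_1(\T^2;\Z)$. The stated identification of the Floer homology of a Hamiltonian isotopy with standard Hamiltonian Floer homology then immediately yields $\HF_*(\psi_1) = \HF_*(\psi_1;0) = \mathrm{H}_{*+1}(\T^2;\Z)$, which is the first alternative.

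If instead $v \neq 0$, I would apply invariance under composition with the Hamiltonian diffeomorphism $\phi^{-1}$ to reduce the problem to computing $\HF_*(T_v)$. Since $v$ is a nonzero element of the compact group $\T^2$, the translation $T_v$ displaces every point by a positive amount, and hence has no fixed points at all. The nondegeneracy hypothesis in the Floer construction is then vacuously satisfied, the generating chain complex of $\HF_*(T_v)$ is empty, and therefore $\HF_*(T_v) = 0$, which is the second alternative.

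The main point to justify carefully is the step in the Hamiltonian case that passes from the given symplectic isotopy $\psi_t$ to a Hamiltonian isotopy with the same endpoint while preserving the full $\mathrm{H}_1(\T^2;\Z)$-splitting of the Floer homology. I would handle this by applying the invariance property along an explicit interpolation between the two isotopies; the key input is that Hamiltonian isotopies have trivial flux and so do not shift the homology classes carried by the trajectories of fixed points, forcing all the homology to concentrate in the $c = 0$ summand.
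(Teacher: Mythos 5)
Your argument is correct and is essentially the paper's own proof: both use the flux homomorphism to factor $\psi_1$ as a translation of $\T^2$ composed with a Hamiltonian diffeomorphism, then use invariance of $\HF$ under Hamiltonian composition to reduce to the translation, which is either the identity (giving $\mathrm{H}_{*+1}(\T^2;\Z)$ concentrated in the class-zero summand, via the computation for a perturbed identity) or fixed-point free (giving trivial $\HF$). The only cosmetic difference is that you invoke the computation of the flux group, i.e.\ $\mathrm{Symp}_0(\T^2,\Omega)/\Ham(\T^2,\Omega)\cong \mathrm{H}^1(\T^2;\R)/\mathrm{H}^1(\T^2;\Z)$, whereas the paper works in the universal cover $\widetilde{\mathrm{Symp}}_0(\T^2,\Omega)$ and needs only that the kernel of the flux homomorphism consists of classes represented by Hamiltonian isotopies, which avoids that extra classical input.
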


\begin{proof}
This can be derived easily using the Flux homomorphism. 
Let $\widetilde{\mathrm{Symp}}_0(\T^2, \Omega)$ denote the universal cover of $\mathrm{Symp}_0(\T^2, \Omega)$, the identity component of the 
space of symplectic diffeomorphisms  of $(\T^2, \Omega)$. The points of $\widetilde{\mathrm{Symp}}_0(\T^2, \Omega)$ are of the form $[\psi_t]$ where $\psi_t$ is a symplectic isotopy 
starting at the identity and $[\psi_t]$ is the homotopy class of $\psi_t$ relative its endpoints. The {\it flux homomorphism} 
$$\Ff \colon \widetilde{\mathrm{Symp}}_0(\T^2, \Omega) \to   \mathrm{H}^1(\T^2; \R) $$ is then defined by 
$$\Ff([\psi_t]) = \int_0^1[\vartheta_t] \, dt , $$ 
where  $\vartheta_t  = -i_{X_t} \Omega$ and  $X_t$ is the vector field 
generating $\psi_t$. Besides the fact that is indeed  a homomorphism  (where the target  $\mathrm{H}^1(\T^2; \R)$ is identified with
$\mathrm{Hom}(\pi_1(\T^2), \R)$),  the other crucial property of $\Ff$ it that its kernel consists of the classes in $\widetilde{\mathrm{Symp}}_0(\T^2, \Omega)$ which can be represented
by Hamiltonian isotopies, \cite{ms}.

Without loss of generality we may assume that  $\Omega = d\theta_1 \wedge d\theta_2$ where $\theta_1, \theta_2 \in \R/ \Z$ are  global angular coordinates on $\T^2$. Let $a_1$ and $a_2$
be the standard generators of $\pi_1(\T^2)$ corresponding to these coordinates. The flux $\Ff([\psi_t])$ is then specified by the 
two numbers $A_1 = \Ff([\psi_t])(a_1)$ and $A_2 = \Ff([\psi_t])(a_2)$.

Consider now the symplectic isotopy  
\begin{equation*}
\label{ }
\mathbf{S}_{t}(\theta_1, \theta_2) = \left(\theta_1 +tA_1, \theta_2   + t A_2 \right).
\end{equation*}
The flux of $[\mathbf{S}_t]$ is equal to that of $[\psi_{t}]$. Since $\Ff$ is a homomorphism, we have
$$\Ff([(\psi_{t})^{-1} \circ \mathbf{S}_{t} ]) = 0 \in \mathrm{H}^1(\T^2; \R)).$$
The characterization of the kernel of $\Ff$ then implies that 
$[(\psi_{t})^{-1} \circ \mathbf{S}_{t} ] = [\phi^t_{G}]$
for some Hamiltonian flow $\phi^t_{G}$. In particular, we have
\begin{equation*}
\label{simple}
 \mathbf{S}_{1}  = \psi_1 \circ \phi^1_{G}.
\end{equation*}
By the invariance of Floer homology under Hamiltonian isotopies, this yields
$$\HF(\psi_1) = \HF(\mathbf{S}_{1}).$$ The Floer homology of $\mathbf{S}_{1}$ is now easy to compute. If $A_1=A_2 =0 \mod 1$, then
$\mathbf{S}_{1}$ is the identity map, and by \eqref{hfham} and \eqref{hfidentity} we have 
\begin{equation*}
\label{ }
\HF(\mathbf{S}_{1}) = \HF(\mathbf{S}_{1}; 0),
\end{equation*}
and
\begin{equation*}
\label{ }
 \HF_*(\mathbf{S}_{1}; 0)= \mathrm{H}_{*+1}(\T^2;\Z).
\end{equation*}
Otherwise, $\mathbf{S}_{1}$ has no fixed points and hence $\HF(\mathbf{S}_{1})$ is trivial. The result follows.
\end{proof}

\section{Proof of Theorem \ref{main}}

Let $\phi$ be a  Hamiltonian diffeomorphism of $S^2$ with finitely many periodic points.  It suffices to prove Theorem \ref{main}
for any iteration of $\phi$. Using the freedom to chose this iteration we may assume that the periodic points of $\phi$ are all fixed points (have period one).  The iteration formula \eqref{iter} implies that we may also assume that the mean index of any fixed point of $\phi$ is either irrational or is equal to zero modulo $4$.  In particular, we have
\begin{equation*}
\label{ }
\Per(\phi) = \Fix(\phi) = \{ p_1, \dots , p_l, r_1, \dots, r_m\} , 
\end{equation*}
where  for $j =1, \dots, l$ the mean indices  $\Delta(p_j)$ are irrational, and for $j =1, \dots, m$ we have $\Delta(r_j) =0 \mod 4$.

\subsection{Resonance and periodic points with irrational mean indices}

The starting point for the proof of Theorem \ref{main} is to show that the number of fixed points of $\phi$ with irrational mean indices is at least two. 
To prove this we require  the theory of resonance relations for Hamiltonian 
diffeomorphisms developed in \cite{gk}.  We now present a condensed version of these results, keeping only those features which are relevant to 
the task at hand.

As before, let $(M,\omega)$ be a closed symplectic manifold of dimension $2n$ with minimal Chern number $N$. 
Suppose also that $(M, \omega)$ is both weakly monotone (see, for example,
\cite{MS}) and rational.  A Hamiltonian diffeomorphism is said to be \emph{perfect} if it has finitely 
many contractible periodic
points all of which are fixed points. Let $\varphi$ be a perfect Hamiltonian diffeomorphism
of $(M, \omega)$ and let $\Delta_1,\ldots, \Delta_m$ be the collection of irrational mean indices of the contractible fixed
points of $\varphi$ (as described in Section \ref{indexmod} these are defined modulo $2N$).  A \emph{resonance relation}
for $\varphi$ is a vector $\va=(a_1,\ldots,a_m)\in\Z^m$ such that
$$
a_1\Delta_1+\ldots+a_m\Delta_m=0\mod 2N.
$$
The set of resonance relations of $\varphi$ forms a free abelian group $\CR=\CR(\varphi)\subset \Z^m$. 

\begin{thm}({\cite{gk}})
\label{thm:res}
Assume that $n+1\leq N<\infty$. 
\begin{itemize}

\item[(i)] Then $\CR\neq 0$, i.e., the irrational mean indices $\Delta_i$ satisfy at least
  one non-trivial resonance relation.

\item[(ii)] If there is only one resonance relation, i.e.,
  $\rk\CR=1$, then it has a generator of the form  $r \va=(ra_1,\ldots,ra_m)$, 
where $a_i\geq 0$ for all $i$,
$$
\sum a_i \leq \frac{N}{N-n},
$$
and $r$ is the smallest natural number such that the
mean index of each fixed point of $\phi^r$ is either irrational or is equal to zero modulo $2N$.\footnote{In \cite{gk}, this second statement is stated for the collection all nonzero mean indices, in which case one can take $r=1$. The formulation here is described in Remark 2.1 of \cite{gk}.} 
\end{itemize}

\end{thm}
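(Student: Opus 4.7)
My plan is to extract the resonance relations from the interplay between local Floer homology at each contractible fixed point and the rigidity of the total Floer homology of the iterates $\varphi^k$. To each such fixed point $x_i$ with irrational mean index $\Delta_i$ I would associate, for every $k \in \N$, the local Floer homology group $\HFL(\varphi^k, x_i)$. The iteration formula \eqref{iter} together with the inequality \eqref{close} forces this graded group to be supported in the degree window within distance $n$ of $k\Delta_i$ modulo $2N$, and to vanish outside this window. Perfection of $\varphi$, together with passing to $\varphi^r$ as in the statement, ensures that all contractible fixed points of every relevant iterate remain isolated, so the Floer complex splits as a direct sum indexed by them.

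I would then invoke the fundamental rigidity result for weakly monotone rational $(M,\omega)$: the total Floer homology $\HF_*(\varphi^k)$ is canonically isomorphic to the singular homology of $M$ with appropriate Novikov coefficients, graded modulo $2N$, and so is nonzero in certain fixed degrees that do not depend on $k$. Every nonzero class in $\HF_*(\varphi^k)$ must be represented by at least one fixed point $x_i$ whose local Floer homology is nonzero in the matching degree; hence at each $k$ there is a carrier $x_i$ for which $k\Delta_i$ lies within distance $n$ of a fixed target degree $d$ modulo $2N$.

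For part (i), a pigeonhole argument over the finite set of fixed points produces a single $x_i$ that serves as carrier for an infinite sequence of iterates $K \subset \N$. The recurrence of $\{k\Delta_i \bmod 2N\}_{k \in K}$ to a bounded interval around $d$, combined with the analogous constraints coming from carriers of other homology degrees, is incompatible with $\Q$-linear independence of the $\Delta_i$'s modulo $2N$: Weyl equidistribution would otherwise force the joint orbit $\{(k\Delta_1, \ldots, k\Delta_m) \bmod 2N\}$ to equidistribute in a torus and so eventually escape the required neighborhoods. This yields a nontrivial resonance vector $\va \in \CR$.

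For part (ii), assuming $\rk \CR = 1$, I would sharpen the counting. As $k$ varies along the unique rank-one direction of $\CR$, the length-$2n$ local supports are forced to cover every carrier degree of the total Floer homology inside $\R/2N\Z$, and the hypothesis $N \geq n+1$ ensures each such interval is strictly shorter than a full period. A direct covering count of $\R/2N\Z$ by translates of length-$2n$ arcs, indexed by the rank-one lattice, then yields $\sum a_i \leq N/(N-n)$; the nonnegativity of the $a_i$ comes from orienting the generator of $\CR$ so that it encodes the carrier pattern for a specific class such as the fundamental class. The main obstacle will be this final combinatorial step: extracting the sharp inequality $\sum a_i \leq N/(N-n)$ together with $a_i \geq 0$ requires a careful analysis of how the translated intervals $[k\Delta_i - n, k\Delta_i + n]$ tile $\R/2N\Z$ along the rank-one resonance lattice, and simultaneously identifying the extremal carrier homology class that witnesses the bound.
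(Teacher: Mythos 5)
First, a point of comparison: the paper does not prove Theorem \ref{thm:res} at all --- it is quoted from \cite{gk} (in the reformulation of Remark 2.1 there) --- so your proposal has to be measured against the argument of \cite{gk}. Your outline does reproduce its skeleton: the local Floer homology of $\varphi^k$ at an isolated fixed point is supported within $n$ of $k\Delta_i$ modulo $2N$, the global Floer homology of $\varphi^k$ is nonzero in degrees independent of $k$, every nonzero global class must have a local carrier among the finitely many fixed points, and these facts are played off against a Kronecker--Weyl density argument; for (ii) the constant $N/(N-n)$ does indeed come from a length/covering estimate involving arcs of length $2N-2n$.

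There is, however, a genuine gap at the heart of part (i). After passing to $\phi^r$ you must allow fixed points with mean index $\equiv 0 \bmod 2N$ (this is exactly what $r$ is for), and the window $[-n,n]$ of such a point contains \emph{every} degree in which the global Floer homology can be nonzero, since all such degrees lie within $n$ of the middle degree. Hence equidistribution of the irrational indices can never make ``all windows escape the required neighborhoods'': a zero-index point is an admissible carrier for every $k$, and your contradiction evaporates. The proof in \cite{gk} closes this hole with an endpoint refinement: a carrier in the extreme degree $\Delta+n$ with $\Delta\equiv 0$ must be a symplectically degenerate maximum, and then the Conley-conjecture mechanism of Hingston and Ginzburg, in the form developed by Ginzburg--G\"urel, produces infinitely many periodic points, contradicting perfectness. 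This is precisely why the present paper remarks that Theorem \ref{thm:res} rests on the ideas behind the Conley conjecture; no pigeonhole or recurrence argument substitutes for it. (Your phrasing of (i) is also logically off: what is needed is a single iterate $k$ admitting \emph{no} carrier, not one carrier recurring along infinitely many $k$.) In part (ii) there is a second gap: nonnegativity of the $a_i$ cannot be obtained by ``orienting the generator'' --- replacing $\va$ by $-\va$ flips all signs simultaneously and does not repair a mixed-sign generator --- and the covering count must be carried out on the closure of $\{k(\Delta_1,\dots,\Delta_m)\}$ in $(\R/2N\Z)^m$, a closed subgroup whose finite component group (again the role of $r$) matters, not on a single circle. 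As written, the sketch asserts the two conclusions of (ii) rather than deriving them.
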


Returning to the proof of Theorem \ref{main}, we derive the following consequence for our Hamiltonian diffeomorphism $\phi$.

\begin{cor}
\label{two}
At least two of the fixed points of $\phi$, say $P$ and $Q$, have irrational mean indices, i.e., are strongly nondegenerate elliptic fixed points. Moreover, if $P$ and $Q$ are the only fixed points of $\phi$, then their irrational mean indices 
 $\Delta(P)$ and $\Delta(Q)$ satisfy
 $$
 \Delta(P) + \Delta(Q) = 0 \mod 4.
 $$
\end{cor}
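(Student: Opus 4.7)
The plan is to apply the resonance Theorem \ref{thm:res} of \cite{gk} directly, with $l$ (the number of fixed points of $\phi$ with irrational mean index) playing the role of the integer $m$ in that theorem. For $(S^2,\omega)$ one has $n=1$ and minimal Chern number $N=2$, so the hypothesis $n+1\leq N<\infty$ is satisfied, the modulus $2N$ equals $4$, and the bound $\frac{N}{N-n}$ equals $2$. Since $\pi_1(S^2)=0$, every fixed point of $\phi$ is contractible, so $\phi$ qualifies as a perfect Hamiltonian diffeomorphism. Moreover, the preliminary iteration carried out at the start of the proof of Theorem \ref{main}, which guarantees that the mean index of each fixed point is either irrational or $=0\mod 4$, is precisely what forces the parameter $r$ appearing in Theorem \ref{thm:res}(ii) to equal $1$ for our $\phi$. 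With these data in place, the corollary reduces to a bookkeeping exercise inside the resonance group $\CR$.

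To prove $l\geq 2$, I would first note that $\CR\neq 0$, the assertion of Theorem \ref{thm:res}(i), already forces $l\geq 1$, for otherwise $\CR\subset\Z^0=\{0\}$. Suppose for contradiction that $l=1$. Then $\rk\CR=1$ automatically, so Theorem \ref{thm:res}(ii) provides a generator $\va=(a_1)$ with $0\leq a_1\leq 2$. As a generator $a_1\neq 0$, so $a_1\in\{1,2\}$, and then the resonance relation $a_1\Delta(p_1)=0\mod 4$ contradicts the irrationality of $\Delta(p_1)$. Hence $l\geq 2$, and one may set $P=p_1$ and $Q=p_2$.

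For the second assertion, assume $\Fix(\phi)=\{P,Q\}$, so $l=2$ and there are no fixed points with integer mean index. Since $\Delta(P)$ and $\Delta(Q)$ are irrational, the rank of $\CR$ cannot equal $2$: two rationally independent integer relations on two unknowns in $\R/4\Z$ would force both $\Delta(P)$ and $\Delta(Q)$ into $\Q$. Combined with $\CR\neq 0$ this gives $\rk\CR=1$, so Theorem \ref{thm:res}(ii) yields a generator $(a_1,a_2)$ with $a_1,a_2\geq 0$ and $a_1+a_2\leq 2$. The same single-index argument used above rules out any coefficient vector of the form $(1,0)$, $(0,1)$, $(2,0)$, or $(0,2)$, since each would force one of $\Delta(P), \Delta(Q)$ to be rational. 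The only remaining possibility is $(a_1,a_2)=(1,1)$, giving $\Delta(P)+\Delta(Q)=0\mod 4$.

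The only subtle point I anticipate is justifying that the preliminary iteration performed at the start of the proof of Theorem \ref{main} is exactly what makes the parameter $r$ in Theorem \ref{thm:res}(ii) equal to one for our $\phi$; this is what unlocks the bound $\sum a_i\leq 2$ that does all the work. Modulo this verification, which should be routine from the definition of $r$, the entire argument is a direct combinatorial extraction from the resonance theorem.
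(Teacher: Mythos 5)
Your proposal is correct and takes essentially the same approach as the paper: apply Theorem \ref{thm:res} with $n=1$, $N=2$ (so the modulus is $4$ and the bound is $2$), rule out $\rk\CR=2$ because two independent integer relations would force rationality, note that the preliminary iteration gives $r=1$, and read off that the generator must be $(1,1)$. The only (harmless) difference is that for the first assertion the paper uses part (i) alone --- a nontrivial relation among fewer than two irrational indices is already impossible --- whereas you additionally invoke part (ii) in the $l=1$ case, which is unnecessary but not incorrect.
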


\begin{proof}
The sphere is weakly monotone and rational and its minimal Chern number is two ($N=2=n+1$). Hence, Theorem \ref{thm:res} applies and part (i) implies the first assertion of the corollary since there must be at least two fixed points of $\phi$ with irrational mean indices in order for a single non-trivial resonance relation to exist. 

Suppose that $\phi$ has exactly two fixed points, $P$ and $Q$. By part (i) both $P$ and $Q$ have irrational mean indices. These indices can not satisfy two independent resonance relations, otherwise they would be  the unique solutions (modulo $4$) of a linear system  with integer coefficients and hence would be rational. So, in this case,  $\rk\CR=1$ and the conclusion of part (ii) of Theorem \ref{thm:res} holds where the natural number $r$ can be taken to be $1$. This immediately implies the second assertion of Corollary \ref{two}.
\end{proof}

\subsection{An assumption and two paths to a contradiction}

By Corollary \ref{two} and our previous choices we now have
\begin{equation*}
\label{ }
\Per(\phi) = \Fix(\phi) = \{ P, Q, p_3, \dots , p_l, r_1, \dots, r_m\}.
\end{equation*}

\begin{lem}\label{deg}
If there is no fixed point of $\phi$ of type $r_j$ (with $\Delta(r_j) =0\mod 4$) then the points $P$ and $Q$ are the only fixed points of $\phi$.
\end{lem}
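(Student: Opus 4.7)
The plan is to derive $l=2$ from a short Euler characteristic calculation in Hamiltonian Floer homology.

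Under the hypothesis of the lemma, every fixed point of $\phi$ has an irrational mean index. In dimension two this already forces each fixed point to be nondegenerate: the linearization at such a point is a genuine rotation, so has no eigenvalue equal to one. Hence the Hamiltonian Floer homology $\HF_*(\phi)$ is well-defined in the classical sense.

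Next, I would apply Lemma \ref{determine} at each fixed point $X$: since $\Delta(X)$ is irrational, the Conley--Zehnder index $\mu(X)$ is the odd integer closest to $\Delta(X)$, and is in particular odd. A change of spanning disc shifts $\mu(X)$ by a multiple of $2N=4$, so its parity is unambiguous, and each fixed point contributes $(-1)^{\mu(X)}=-1$ to the Floer Euler characteristic. Summing over all $l$ fixed points gives
$$
\chi\bigl(\HF_*(\phi)\bigr) \;=\; \sum_{X \in \Fix(\phi)} (-1)^{\mu(X)} \;=\; -l.
$$
On the other hand, the analogue on $S^2$ of the Hamiltonian statement quoted in the torus case shows that $\HF_*(\phi) \cong \mathrm{H}_{*+1}(S^2;\Z)$, whose Euler characteristic is $-\chi(S^2)=-2$. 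Equating these two expressions yields $l=2$, and since $\{P,Q\} \subseteq \Fix(\phi)$ this forces $\Fix(\phi)=\{P,Q\}$, as desired.

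The main point to justify carefully is the bookkeeping surrounding the parity claim: one needs to check that the parity of $\mu(X)$ is in fact well-defined on $S^2$ in spite of the non-vanishing first Chern class (which is so because the relevant ambiguity is by multiples of $2N=4$), and to fix the sign convention in the Euler characteristic of $\HF$ to match the shift by $n=1$. Once that is in place, the identity $-l=-2$ is immediate and the lemma follows.
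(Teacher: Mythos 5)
Your argument is correct, but it takes a noticeably heavier route than the paper. The paper's proof is a one-line application of the classical Lefschetz fixed point theorem: under the hypothesis every fixed point has irrational mean index, hence is a nondegenerate elliptic point whose local fixed point index is $+1$, so the Lefschetz theorem forces $l=\chi(S^2)=2$, contradicting $l>2$. You instead pass through Hamiltonian Floer homology of $S^2$: oddness of $\mu(X)$ via Lemma \ref{determine}, well-definedness of the parity because the ambiguity is a multiple of $2N=4$, and the computation of $\HF_*(\phi)$ as the (quantum) homology of $S^2$, giving $-l=\chi(\HF_*(\phi))=-2$. Note that on $S^2$ this Floer homology carries Novikov-ring coefficients and only a $\Z/4$-grading, neither of which the paper sets up (it only discusses $\HF$ for the torus), so you are importing an external black box; the parity bookkeeping still goes through, but it is exactly the sort of convention-checking you flag at the end. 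The two computations are essentially equivalent: the identity $\sum_{X\in\Fix(\phi)}(-1)^{\mu(X)}=(-1)^n\chi(M)$ for nondegenerate Hamiltonian diffeomorphisms is the Floer-theoretic incarnation of the Lefschetz theorem, since the local Lefschetz index of a nondegenerate fixed point equals $(-1)^{\mu(X)-n}$, and in both arguments the only substantive inputs are that an irrational mean index forces ellipticity (hence nondegeneracy) and that ellipticity fixes the sign of each fixed point's contribution. So your proof is valid, but the paper's version reaches the same count with strictly less machinery.
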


\begin{proof}
Arguing by contradiction assume that $\Fix(\phi) = \{ P, Q, p_3, \dots , p_l\}$ where $l>2$ and the $\Delta(p_j)$ are all irrational. The fixed points of $\phi$ are then all nondegenerate  and elliptic and so their  topological indices are even. By the Lefschetz fixed point theorem we would then have the Euler characteristic of $S^2$ equal to $l >2$.
\end{proof}

By Corollary  \ref{two} and Lemma \ref{deg} we will be  done if we can show that it is impossible for  $\phi$ to have even one fixed point, $r_1$, with $\Delta(r_1) =0 \mod 4$.  {\bf Arguing by contradiction, we assume that such a point  exists}.
At this point the path to a contradiction splits into two; the first corresponding to the case when at least one of the $r_j$ is degenerate, and the second to the case when all the $r_j$ are nondegenerate.

\subsection{Path 1: one of the $r_j$ is degenerate}

Assume that $\phi$ as above has a fixed point, say  $R=r_1$, which is degenerate (and satisfies $\Delta(R) = 0 \mod 4$). 

\subsubsection{A useful generating Hamiltonian}\label{staticham} We now choose a generating Hamiltonian $H \colon \R/\Z \times S^2 \to \R$ for $\phi$
such that  $P$ and $R$ are both static fixed points of the flow of $H$, that is  $\phi=\phi^1_H$ and both $P$ and $R$ are fixed points of $\phi^t_H$ for all $t \in \R$.  
We begin with any Hamiltonian $G$ generating $\phi$.  Let $u_R \colon \mathbb{D}^2 \to S^2$  be a smooth spanning disc for $\phi^t_G(R)$. As described in Section 9 of  \cite{SZ}  (see also Section 5.1 of \cite{Gi:conley}), one can use this disc to construct a  contractible loop of Hamiltonian diffeomorphisms,  $\gamma_1^t$, such that $\gamma_1^t \circ \phi^t_G(R)= R$ for all $t \in \R$, and $\gamma^t_1$  is supported in an arbitrarily small neighborhood of the image of $u_R$ (which might be all of $S^2$).  The curve  $\gamma_1^t \circ \phi^t_G(P)$ does not pass through $R$ and is contractible in its complement. Hence, we can choose  a spanning disc  $u_P$ for  $\gamma_1^t \circ \phi^t_G(P)$ whose image doesn't contain $R$. Using it, as above, we can then  
construct a contractible loop of Hamiltonian diffeomorphisms,  $\gamma_2^t$, which is trivial in a neighborhood of $R$, and satisfies $\gamma_2^t \circ\gamma_1^t \circ \phi^t_G(P)= P$ for all $t \in \R$. Let $H$ be the unique generating Hamiltonian of the Hamiltonian path $\gamma_2^t \circ\gamma_1^t \circ \phi^t_G$ such that $H(t,R) =0$ for all $t \in \R/\Z$.  By reparameterizing the path $\gamma_2^t \circ\gamma_1^t \circ \phi^t_G$ we may also assume that $H$ vanishes when $t$ is within some small fixed distance, say $0<\delta_H \ll 1$, of  $0\in \R/\Z$.

\subsubsection{A generic perturbation of $\phi^k$}

For a $k \in \N$, the Hamiltonian diffeomorphism $\phi^k$ is generated by the Hamiltonian 
\begin{equation*}
\label{ }
H_k(t,p)=kH(kt, p).
\end{equation*}
More precisely, we have $\phi^t_{H_k} = \phi^{kt}_H$ for all $t \in \R$. Note that $P$ and $R$ are still static fixed points of 
the flow of $H_k$ and, by \eqref{iter} we have 
\begin{equation}
\label{kbulletp}
\Delta(P; \phi^t_{H_k}, [\xi^k]) = k\Delta(P; \phi^t_{H}, [\xi]).
\end{equation}
and, similarly 
\begin{equation}
\label{kbulletr}
\Delta(R; \phi^t_{H_k}, [\xi^k])  = k\Delta(R; \phi^t_{H}, [\xi])
\end{equation}
for any choice of the class $[\xi]$. 

\begin{lem}\label{k-pert}
For each $k \in \N$ there is a neighborhood $U_k$ of $R$ and a Hamiltonian flow $\widetilde{\phi}_{k,t}$ which
 is arbitrarily $C^{\infty}$-close to $\phi^t_{H_k}$, is equal to $\phi^t_{H_k}$ outside of $U_k$, and whose fixed point set has the form
 \begin{equation*}
\label{ }
\Fix(\widetilde{\phi}_{k,1}) = \{ P, Q, p_3 \dots , p_l, R,R_1, \dots, R_d , r_2,  \dots, r_m\},
\end{equation*}
where
\begin{itemize}
  \item[(i)]  $R$ is a fixed point of $\widetilde{\phi}_{k,t}$ for all $t$, an elliptic fixed point of $\widetilde{\phi}_{k,1}$, and 
\begin{equation*}
\label{ }
\Delta(R; \widetilde{\phi}_{k,t}, [\xi^k]) = k\Delta(R; \phi^t_{H}, [\xi]) + \lambda/\pi
\end{equation*}
where $[\xi]$ is any class of symplectic trivializations, and $\lambda>0$ is arbitrarily close to $0$. 
  \item[(ii)] the $R_j$ are all contained in $U_k$. They are nondegenerate and $\Delta(R_j; \widetilde{\phi}_{k,t}, [\xi^k])$ is arbitrarily close to  $k\Delta(R; \phi^t_{H}, [\xi])$ for $j =1, \dots, d$ and any choice of $[\xi]$.
  \item[(iii)] none of the $\widetilde{\phi}_{k,t}$ trajectories of the remaining fixed points of $\widetilde{\phi}_{k,1}$ enter $U_k$. 
\end{itemize}

\end{lem}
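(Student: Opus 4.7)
The plan is to obtain $\widetilde{\phi}_{k,t}$ as the flow of an arbitrarily $C^\infty$-small, localized perturbation of $H_k$. Since $R$ is a static fixed point of $\phi^t_H$, and $H_k(t,R)\equiv 0$ by the construction in Section \ref{staticham}, we can choose Darboux coordinates $(x,y)$ centered at $R$ in which $H_k(t,\cdot)$ has neither a constant nor a linear term at $R$. We shrink $U_k$ so that its closure is disjoint from the remaining fixed points of $\phi^1_{H_k}$ and from each of their $\phi^t_{H_k}$-trajectories; this is possible because those trajectories form a compact subset of $S^2\setminus\{R\}$. Let $\chi\colon S^2\to[0,1]$ be a smooth cutoff equal to $1$ on a smaller neighborhood $V_k\Subset U_k$ and supported in $U_k$.

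Consider the perturbation
\begin{equation*}
\widetilde{H}_k(t,p) \;=\; H_k(t,p) \;+\; \sigma\lambda\,\chi(p)\cdot\tfrac{1}{2}(x^2+y^2) \;+\; \epsilon\,F(p),
\end{equation*}
where $F$ is a smooth function supported in $U_k\setminus V_k$, the parameters $\lambda,\epsilon>0$ are arbitrarily small, and $\sigma\in\{\pm 1\}$ is chosen below. Since the added terms vanish to order at least one at $R$, the point $R$ remains a static fixed point of the new flow. Where $\chi=1$, the perturbation is the harmonic oscillator $\sigma\lambda(x^2+y^2)/2$, whose flow is rotation at constant angular speed $\sigma\lambda$ and whose linearization at $R$ commutes with $A_k(t) := D(\phi^t_{H_k})_R$. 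Hence the linearization of $\widetilde{\phi}_{k,t}$ at $R$ is the commuting product $A_k(t)\cdot R_{\sigma\lambda t}$, and by additivity of the mean index for products of commuting paths in $\Sp(1)$,
\begin{equation*}
\Delta(R;\widetilde{\phi}_{k,t},[\xi^k]) \;=\; k\Delta(R;\phi^t_H,[\xi]) \,+\, \sigma\lambda/\pi.
\end{equation*}
Because $A_k(1)$ has $1$ as a repeated eigenvalue, it is either the identity or conjugate in $\Sp(1)$ to a non-trivial shear; in either case, a suitable choice of $\sigma\in\{\pm 1\}$ makes $A_k(1)\cdot R_{\sigma\lambda}$ a matrix of trace in $(-2,2)$ and therefore elliptic. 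Choosing $\sigma$ this way and relabeling so that the shift is written $+\lambda/\pi$ with $\lambda>0$ establishes (i).

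For (ii), a standard Sard-Smale transversality argument shows that for a generic choice of $F$ the time-one map $\widetilde{\phi}_{k,1}$ has only nondegenerate fixed points in $U_k\setminus V_k$; call these $R_1,\dots,R_d$. Each $R_j$ lies close to $R$, so its $\widetilde{\phi}_{k,t}$-trajectory stays in $U_k$, is null-homotopic, and represents the same free-homotopy class as the trivial loop at $R$. The continuity of the mean index (Section \ref{continuity}) then forces $\Delta(R_j;\widetilde{\phi}_{k,t},[\xi^k])$ to be arbitrarily close to $\Delta(R;\phi^t_{H_k},[\xi^k]) = k\Delta(R;\phi^t_H,[\xi])$, yielding (ii). For (iii), $\widetilde{H}_k$ agrees with $H_k$ outside $U_k$, and by the choice of $U_k$ the $\phi^t_{H_k}$-trajectories of the other fixed points never meet $U_k$; hence these fixed points and their trajectories persist unchanged under $\widetilde{\phi}_{k,t}$.

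The main technical obstacle is the simultaneous control of the degeneracy type at $R$ and the sign of the mean index shift there. This is handled by the case analysis of the endpoint $A_k(1)$ (identity versus non-trivial shear) together with the choice of sign $\sigma$; the nondegeneracy claim for the $R_j$ then follows by a standard transversality argument for the generic perturbation $F$.
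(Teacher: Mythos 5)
Your key assertion --- that the linearization at $R$ of the flow of $H_k+\sigma\lambda\,\chi\cdot\tfrac12(x^2+y^2)$ is the commuting product $A_k(t)\cdot R_{\sigma\lambda t}$ --- is false in general, and this undermines both the mean-index formula in (i) and your ellipticity argument. The linearized flow at $R$ is generated by $J\bigl(\mathrm{Hess}_R H_k(t)+\sigma\lambda I\bigr)$, and the rotation generator $\sigma\lambda J$ commutes with $J\,\mathrm{Hess}_R H_k(t)$ only when $\mathrm{Hess}_R H_k(t)$ is a multiple of the identity: writing $A(t)=B(t)R_{\sigma\lambda t}$ one finds $\dot B=J\,\mathrm{Hess}_R H_k(t)\,B+\sigma\lambda(JB-BJ)$, so $B\neq A_k$ unless $A_k(t)$ commutes with $J$ for all $t$. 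Consequently neither the exact shift $\Delta(R;\widetilde\phi_{k,t},[\xi^k])=k\Delta(R;\phi^t_H,[\xi])+\sigma\lambda/\pi$ nor the identification of the endpoint with $A_k(1)R_{\sigma\lambda}$ (on which your trace computation rests) follows from your construction. The paper's proof is designed precisely to avoid this: the Hamiltonian $H$ of Section \ref{staticham} is arranged to vanish for $t$ in a fixed interval near $0\in\R/\Z$, so $H_k$ vanishes for $t\in[1-\delta_H/k,1]$, and the rotational perturbation is inserted as $\kappa(t)G(p)$ with $\kappa$ supported in that time interval. The perturbed isotopy is then a genuine time-concatenation of $\phi^t_{H_k}$ with a rotation about $R$, for which the mean index is exactly additive and the endpoint linearization is exactly $R_\lambda\cdot A_k(1)$. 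To salvage your version you would either have to justify the index shift for a superposed (rather than concatenated) perturbation by a different argument, or adopt the paper's time-splitting device.

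Two smaller points. First, the ``relabeling'' at the end of your proof of (i) is not legitimate: if the shear forces $\sigma=-1$, the shift is $-\lambda/\pi<0$ and cannot be rewritten as $+\lambda/\pi$ with $\lambda>0$; the honest conclusion is a nonzero shift of arbitrarily small absolute value (which is in fact all that is used later --- only the nontriviality of the induced rotation on the boundary circle over $R$ matters). That said, your case analysis of $A_k(1)$ (identity versus nontrivial shear) and the resulting sign choice addresses a point that the paper glosses over when it asserts that the linearization of the perturbed time-one map at $R$ is a rotation by $\lambda$. Second, you run the transversality argument on $U_k\setminus V_k$ with $V_k$ fixed in advance, but you never verify that the perturbed time-one map has no fixed points in $V_k$ other than $R$; the paper's order of operations matters here: first make $R$ elliptic, deduce that some ball $V$ around $R$ contains no other fixed points, and only then perturb generically on $U_k\smallsetminus V$.
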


\begin{proof}
Besides some simple manipulations we will require only the following generic transversality result for Hamiltonian diffeomorphisms:  Let $\phi^1_F$ be a Hamiltonian diffeomorphism 
of a symplectic manifold $(M, \omega)$ and $U$ an open subset of $M$ whose boundary is smooth and contains 
no fixed point of $\phi^1_F$. Then there is a Hamiltonian $\widetilde{F}$  arbitrarily $C^{\infty}$-close to $F$ which equals $F$
in the complement of $S^1 \times U$  and whose  fixed points in $U$ are all nondegenerate. 

Now, choose a Darboux ball $U_k$ around $R$ such that none of the $1$-periodic trajectories of the Hamiltonian flow of $H_k$, other than that through $R$, enter $U_k$. Let $(x,y)$ be the Darboux coordinates in $U_k$.
By our choice of $H$, it follows from the definition of $H_k$ that it vanishes for $t \in [1-\delta_H/k, 1]$.
Let $G$ be a small function supported in $U_k$ which equals $\frac{\lambda_0}{2\pi}(x^2 +y^2)$ near $R$ for a $\lambda_0$ which is a(n) (arbitrarily) small positive number. Let $\kappa\colon \R \to[0,1]$ be a smooth bump function 
such that $\kappa(t)=1$ for $t \in [1-3\delta_H/4k, 1- \delta_H/4k]$ and $\kappa$ vanishes outside  $(1-\delta_H/k, 1)$. Viewing $\kappa$ as a $1$-periodic function, we set $G'(t,p)= \kappa(t)G(p)$ and let $\phi_t'$ be the Hamiltonian flow of $G' +H_k$. Clearly $R$ is still a static fixed point of $\phi_t'$ and since the flows of $G'$ and $H_k$ are supported in disjoint time domains we have 
\begin{equation*}
\label{ }
\Delta(R; \phi^t_{G'+H_k}, [\xi^k]) = \Delta(R; \phi^t_{H_k}, [\xi^k]) + \Delta(R; \phi^t_{G'}, [\xi^k]) = k\Delta(R; \phi^t_{H}, [\xi]) + \lambda/\pi
\end{equation*}
where $\lambda= \lambda_0 \int_0^1 \kappa(t) \, dt$. This settles the  assertion (i) of the lemma.

In appropriate coordinates, the linearization of $\phi'_1$ at $R$ is rotation by $\lambda$ radians. Hence there are no fixed points of $\phi'_1$ in some Darboux ball $V$ around $R$ in $U_k$. Using the fact above, we can then perturb $G'+H_k$ in $S^1 \times (U_k \smallsetminus V)$ to obtain a Hamiltonian  $\widetilde{F}$  whose fixed points $R_1, \dots, R_d$ in $U_k \smallsetminus V$ are all nondegenerate. Setting $\widetilde{\phi}_{k,t} = \phi^t_{\widetilde{F}}$ we are done. In particular,  the continuity property of the mean index described in Section \ref{continuity} implies that each $\Delta(R_j; \widetilde{\phi}_{k,t}, [\xi^k])$ is arbitrarily close to $\Delta(R; \phi^t_{H_k}, [\xi^k])$ and hence $k\Delta(R; \phi^t_{H}, [\xi])$.  Thus condition (ii) is satisfied. Our choice of $U_k$ ensures that condition (iii) is also satisfied.ÊÊÊ
\end{proof}

\subsubsection{Completing the restriction of $\widetilde{\phi}_{k,1}$ to $S^2 \smallsetminus \{P,R\}$}\label{complete}
The symplectic manifold $(S^2 \smallsetminus \{P,R\}, \omega)$ is symplectomorphic to the open cylinder $(-1,1) \times \R/2\pi \Z$
equipped with the symplectic form $dz \wedge d\theta$. We now show that the map 
$\widetilde{\phi}_{k,1}$ can be completed to an area preserving diffeomorphism $\overline{\phi}_{k}$ of the closed cylinder $[-1,1] \times \R/2\pi \Z$, where $\overline{\phi}_{k}$ acts on the boundary circles, $\Gamma_P = \{1\} \times  \R/2\pi \Z$ and  $\Gamma_R = \{-1\} \times  \R/2\pi \Z$, as the rotation by  $\pi \Delta(P; \widetilde{\phi}_{k,t}, [\xi])$ and $\lambda$, respectively, for any choice of the class $[\xi]$. 
 
In general, if $X$ is an elliptic fixed point of a symplectic diffeomorphism $\psi_1$ of $S^2$ which is isotopic to the identity, then
the eigenvalues of $D(\psi_1)_{{X}}$ are $e^{\pm i \pi \Delta(X, \psi_t, [\xi])}$, which are independent of the  choice of the class $[\xi]$.
Hence, the eigenvalues of $D(\phi_{k,1})_R$ are $e^{\pm i \pi \lambda}$. If $B(\epsilon)$ is the open ball in $\R^2$ of radius $\epsilon>0$  centered at the origin, then for sufficiently small 
$\epsilon$ there is a symplectic embedding $A_R \colon B(\epsilon) \to (S^2, \omega)$ such that $A_R(0) =R$ and 
$$
D( {A_R}^{-1} \circ \widetilde{\phi}_{k,1} \circ A_{R})_0 =
\begin{pmatrix}
  \cos \lambda    & -\sin \lambda   \\ 
  \sin \lambda   &  \cos \lambda
\end{pmatrix}.
$$
Now consider the map $(r, \theta) \mapsto (\rho= r^2/2, \theta)$ which  takes $(B(\epsilon) \smallsetminus 0 , r dr \wedge d \theta)$ to $\left( (0, \epsilon^2) \times \R / 2 \pi \Z , d \rho \wedge d\theta \right)$. It follows from the linearization above, that in  $(\rho, \theta)$ coordinates the map
${A_R}^{-1} \circ \widetilde{\phi}_{k,1} \circ A_{R}$ extends to the boundary circle $\{0\} \times \R / 2 \pi \Z$ as the map
$$
(\rho, \theta) \mapsto (\rho, \theta+ \lambda).
$$
Applying the same procedure near $P$ we get the desired map $\overline{\phi}_{k}$.

Since $R$ and $P$ are static fixed points of the flow $\widetilde{\phi}_{k,t}$, we can complete each of the maps $\widetilde{\phi}_{k,t}$ to an area preserving diffeomorphism  $\overline{\phi}_{k,t}$ of the same closed cylider. 
(The restriction of $\overline{\phi}_{k,t}$ to the boundary circle $\Gamma_P$  ($\Gamma_R$) will only be a rotation 
when $P$ ($R$) is an elliptic fixed point of $\widetilde{\phi}_{k,t}$, but this is inconsequential since the boundary circles are always invariant.) In this way we obtain a smooth isotopy of area preserving diffeomorphisms  $\overline{\phi}_{k,t}$ starting at the identity and ending at $\overline{\phi}_{k}$.

\subsubsection{Transfer of dynamics to the torus}\label{transfer}

As in Arnold's famous argument from Appendix 9 of \cite{ar} in support of  his conjectured lower bound for the number of fixed points of Hamiltonian diffeomorphisms, we now extend the map $\overline{\phi}_{k}$ to the torus  formed by gluing two copies of the domain cylinder  $[-1,1] \times \R/2\pi \Z$ along their common boundaries.
In fact, as in \cite{ar},  we first insert two narrow {\it connecting cylinders} along the boundary circles to obtain a symplectic torus $(\T^2, \Omega)$ of total symplectic area  $(8 + 4\tau) \pi$ where each connecting cylinder is 
symplectomorphic to $[0,\tau] \times \R/ 2\pi \Z$. This allows us to extend the  map $\overline{\phi}_{k}$  to an area preserving map  $\psi_k$ of  $(\T^2, \Omega)$ which agrees with $\overline{\phi}_{k}$ on the two large cylinders, and is defined on the connecting cylinders so that  the overall map is smooth. Since $\overline{\phi}_{k}$ has no fixed points on the boundary of its domain, we may also assume (again as in \cite{ar}) that  no new fixed points are introduced in the connecting cylinders. Hence,
$\Fix(\psi_k)$ consists of  two copies of $\Fix(\overline{\phi}_{k})$, which we denote by
 \begin{equation*}
\label{ }
\Fix(\overline{\phi}^{\pm}_{k})=  \{ Q^{\pm}, p^{\pm}_3 \dots , p^{\pm}_l,R^{\pm}_1, \dots, R^{\pm}_d , r^{\pm}_2,  \dots, r^{\pm}_m\} .
\end{equation*}

\subsubsection{The contradiction at the end of Path 1.}

The isotopy $\overline{\phi}_{k,t}$ induces a smooth isotopy $\psi_{k,t}$ from the identity to $\psi_k$.  Hence, the Floer homology of $\psi_k$ is well defined. Now, there are two fixed points of $\psi_k$, $Q^{\pm}$, corresponding to the fixed point $Q$ of $\phi$. As described below, the following result concerning the role of $Q^+$ in $\HF(\psi_k)$ contradicts Proposition \ref{hom}.

\begin{prop}\label{not}
If  $k$ is sufficiently large  then $Q^+$ represents a nontrivial class in $\HF(\psi_k)$,  and if $Q^+$ is contractible then  the degree of the class $[Q^+]$ is  greater than one
in absolute value.
\end{prop}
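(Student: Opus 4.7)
The plan is to read off both assertions from the linear growth in $k$ of the Conley-Zehnder and mean indices of $Q^+$ on $\T^2$, which follows from the iteration formula \eqref{iter} and the irrationality of $\Delta(Q;\phi)$ guaranteed by Corollary \ref{two}.

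First I would compute the mean index of $Q^+$ on $\T^2$. The isotopy $\psi_{k,t}$ agrees with $\widetilde{\phi}_{k,t}=\phi^{kt}_H$ on a neighborhood of $Q^+$, and via the blow-up and doubling constructions of Sections \ref{complete}--\ref{transfer} a neighborhood of $Q^+$ in $(\T^2,\Omega)$ is identified symplectically with a neighborhood of $Q$ in $(S^2,\omega)$. Linearizing the flow and tracking the difference between the canonical flat trivialization of $T\T^2$ and the trivialization of $TS^2$ transferred through the cylinder picture, the iteration formula \eqref{iter} yields
\begin{equation*}
\Delta\bigl(Q^+;\psi_{k,t},[\xi_{\T^2}]\bigr)\;=\;k\bigl(\Delta(Q;\phi^t_H,[\xi_{S^2}])+\sigma\, w_Q\bigr),
\end{equation*}
where $w_Q$ is the winding of the $\phi^t_H$-orbit through $Q$ around the removed points $\{P,R\}$ in the cylinder picture and $\sigma$ is a fixed integer determined by the blow-up charts of Section \ref{complete}. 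Since $\Delta(Q;\phi^t_H)$ is a nonzero irrational and $\sigma w_Q$ is an integer, the parenthesized slope is irrational and nonzero; hence $|\Delta(Q^+;\psi_{k,t})|\to\infty$ linearly in $k$. Because $Q^+$ inherits nondegeneracy from the elliptic $Q$ (the perturbation of Lemma \ref{k-pert} is supported away from $Q^+$), the strict form of \eqref{closer} gives $|\mu(Q^+;\psi_{k,t})|\ge|\Delta(Q^+;\psi_{k,t})|-1$, which also tends to infinity.

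Assertion (ii) follows at once: when the $\psi_{k,t}$-orbit of $Q^+$ is contractible on $\T^2$, the degree of $[Q^+]$ in $\HF_*(\psi_k;0)$ is exactly the Conley-Zehnder index $\mu(Q^+;\psi_{k,t})$ with respect to a capping disc (a well-defined integer as $c_1(\T^2)=0$), and this exceeds $1$ in absolute value once $k$ is large. For assertion (i) I would apply the same index computation to every other fixed point $X$ of $\psi_k$: each lifts from some fixed point $X_0$ of $\phi$ (with $R_j^{\pm}$ grouped under $R$ by Lemma \ref{k-pert}(ii)) and has $\Delta(X;\psi_{k,t})=k(\Delta(X_0;\phi^t_H)+\sigma w_{X_0})+O(1)$. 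The difference $\Delta(Q^+)-\Delta(X)$ grows linearly in $k$ unless the slopes match, which forces $X_0=Q$ and $w_{X_0}=w_Q$, so $X\in\{Q^+,Q^-\}$; by the symmetry of the doubling $\mu(Q^-;\psi_{k,t})=\mu(Q^+;\psi_{k,t})$, a degree offset of $0$ that the Floer differential cannot exploit. In every remaining case (the rational $\Delta(R)\equiv 0\pmod 4$ versus the irrational $\Delta(Q)$; distinct irrational $\Delta(p_j)$ with resonance collisions broken by enlarging $k$ using Theorem \ref{thm:res}) the mean-index gap grows linearly, and \eqref{closer} propagates this to a Conley-Zehnder separation exceeding $1$. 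Hence no generator in the class of $Q^+$ sits at degree $\mu(Q^+)\pm 1$, which forces $\partial Q^+=0$ and $Q^+\notin\im\partial$, so $[Q^+]\neq 0$ in $\HF(\psi_k)$.

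The main obstacle is making the index-transfer formula precise: pinning down the slope $\Delta(Q;\phi^t_H)+\sigma w_Q$ and verifying that the structural constant $\sigma$ does not depend on $k$. This rests on the fact that the blow-up charts of Section \ref{complete} and the gluing cylinders of Section \ref{transfer} are constructed once and for all, not iteratively with $k$. A secondary subtlety is excluding index collisions of $Q^+$ with the other irrational $p_j^{\pm}$; here one invokes the resonance theorem \ref{thm:res} to control the possible algebraic relations among the $\Delta(p_j)$ and then chooses $k$ so that the linear-in-$k$ growth dominates the bounded correction.
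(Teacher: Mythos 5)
Your overall strategy coincides with the paper's: transfer a class of trivializations from $S^2$ to $\T^2$, use the iteration formula \eqref{iter} so that mean-index differences between $Q^+$ and the other fixed points in its homotopy class scale linearly in $k$, convert a large mean-index gap into a Conley--Zehnder gap exceeding $1$ via \eqref{closer}, conclude $\partial Q^+=0$ and $Q^+\notin\im\partial$, and in the contractible case read the degree off $|\Delta(Q^+;\psi_{k,t})|=k|\Delta(Q;\phi^t_H,[\xi])|>2$. The bookkeeping with the flat trivialization and the correction $\sigma w_Q$ is not needed (the paper avoids it by comparing indices only within a fixed homotopy class, with trivializations coupled as in Section \ref{continuity}, and in the contractible case by taking $[\xi]$ from a spanning disc in $S^2\smallsetminus\{P,R\}$); it is harmless, though, since a change of trivialization class shifts $\Delta$ by an integer and so preserves irrationality.

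The genuine gap is in your treatment of mean-index coincidences in assertion (i). You claim that if the slopes of $X$ and $Q^+$ agree then necessarily $X_0=Q$, so $X\in\{Q^+,Q^-\}$; this is unjustified, since nothing prevents a distinct fixed point $p_j$ of $\phi$ from having exactly the same irrational mean index as $Q$ (with coupled trivializations). In that situation the gap is identically zero for all $k$, so enlarging $k$ does nothing, and Theorem \ref{thm:res} is of no help either: it asserts the existence of resonance relations among the $\Delta_i$, it does not rule out equalities between them, and iteration rescales both indices by the same factor. The way to dispose of this case --- and what the paper does --- is Lemma \ref{determine}: if $\Delta(X^{\pm};\psi_{k,t},[\xi^k_+])=\Delta(Q^+;\psi_{k,t},[\xi^k_+])$ and this common value is irrational, then both Conley--Zehnder indices equal the same odd integer, and a degree offset of $0$ cannot be exploited by the Floer differential, which has degree $-1$. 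You in fact make exactly this observation for $Q^-$, so the repair is to apply it to every fixed point with coinciding mean index instead of arguing that such coincidences cannot occur; as written, the dichotomy ``equal index or large gap'' rests on an incorrect exclusion step. (A smaller omission: some of the competing fixed points, e.g.\ the $r_j$ with $j\geq 2$, may be degenerate, and $\HF(\psi_k)$ is defined only after a nondegenerating perturbation; this is why the paper demands a mean-index gap greater than $3$, so that continuity of $\Delta$ together with \eqref{closer} still yields a Conley--Zehnder separation exceeding $1$ after perturbing. Your linear growth gives this, but the threshold and the perturbation should be stated.)
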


\begin{proof}

Fix a class $[{\xi}]$ of symplectic trivializations of $TS^2$ along $\phi^t_{H}(Q)$. The resulting class $[{\xi}^k]$ of symplectic trivializations of $TS^2$ along $\phi^t_{H_k}(Q)$ then determines an equivalence class $[\xi^k_{+}]$ of symplectic trivializations of $T\T^2$ along $\psi_{k,t}(Q^+)$.
Let $X^{\pm}$ be any fixed point of $\psi_k$ in the same homotopy class as $Q^+$, where $X$ is the corresponding fixed point of $\widetilde{\phi}_{k,1}$. Since  the Floer boundary operator decreases degrees by one, to prove the first assertion of Proposition \ref{not} it suffices  to show that for $k$  large enough we have either 
\begin{equation*}
\label{ }
\Delta(X^{\pm}; \psi_{k,t}, [\xi^k_+]) = \Delta(Q^+; \psi_{k,t}, [\xi^k_+])
\end{equation*}
or 
\begin{equation*}
\label{ }
|\Delta(X^{\pm}; \psi_{k,t}, [\xi^k_+]) - \Delta(Q^+; \psi_{k,t}, [\xi^k_+])|>3.
\end{equation*}
More precisely, by Lemma \ref{determine} the first equality implies $$\mu(X^{\pm}; \psi_{k,t}, [\xi^k_+]) = \mu(Q^+; \psi_{k,t}, [\xi^k_+]).$$ Whereas, if the alternative inequality holds and $\widetilde{X}^{\pm}$ is a nondegenerate fixed point obtained from $X$ by perturbation, then it follows from the continuity property of the mean index and \eqref{closer} that
\begin{equation*}
\label{ }
|\mu(\widetilde{X}^{\pm}; \psi_{k,t}, [\xi^k_+]) - \mu(Q^+; \psi_{k,t}, [\xi^k_+])|>1.
\end{equation*}
Since there are only finitely many points $X^{\pm}$, for any $k$,  it will follow that for large enough $k$ the point $Q^+$ is in the kernel of the corresponding Floer boundary map and is not in its image thus establishing the first assertion of Proposition \ref{not}.

\noindent{\it Case 1: $X$ corresponds to a fixed point of $\phi^k$, i.e., $X \neq R^{\pm}_j$.}
By construction, we have 
$$
\Delta(Q^+; \psi_{k,t}, [\xi^k_+])=\Delta(Q; \phi^t_{H_k}, [\xi^k])
$$
and the iteration formula \eqref{iter} then implies that 
\begin{equation}
\label{fun}
\Delta(Q^+; \psi_{k,t}, [\xi^k_+]) =k\Delta(Q; \phi^t_H, [\xi]).
\end{equation}
Similarly, 
\begin{equation}
\label{hog}
\Delta(X^{\pm}; \psi_{k,t}, [\xi^k_+]) = k\Delta(X; \phi^t_H, [\xi]).
\end{equation}
If $\Delta(X; \phi^t_H, [\xi])=\Delta(Q; \phi^t_H, [\xi])$ then we are done as this would imply 
\begin{equation*}
\label{ }
\Delta(X^{\pm}; \psi_{k,t}, [\xi^k_+]) = \Delta(Q^+; \psi_{k,t}, [\xi^k_+]).
\end{equation*}
If  $\Delta(X; \phi^t_H, [\xi]) \neq \Delta(Q; \phi^t_H, [\xi])$, then equations \eqref{fun} and \eqref{hog} yield
\begin{equation*}
\label{ }
|\Delta(X^{\pm}; \psi_{k,t}, [\xi^k_+]) - \Delta(Q^+; \psi_{k,t}, [\xi^k_+])|  =  k|\Delta(X; \phi^t_H, [\xi]) - \Delta(Q; \phi^t_H, [\xi])|.
\end{equation*} 
and for sufficiently large $k$ we have 
\begin{equation*}
\label{ }
|\Delta(X^{\pm}; \psi_{k,t}, [\xi^k_+]) - \Delta(Q^+; \psi_{k,t}, [\xi^k_+])|>3,
\end{equation*}
as desired.

\noindent{\it Case 2: $X=R^{\pm}_j$.} 
Since $\Delta(R^{\pm}_j; \psi_{k,t}, [\xi^k_+]) = \Delta(R_j; \widetilde{\phi}_{k,t}, [{\xi}])$, it follows from
Lemma \ref{k-pert}  that $\Delta(R^{\pm}_j; \psi_{k,t}, [\xi^k_+])$ is arbitrarily close to
$ k \Delta(R; \phi^t_H, [{\xi}]).$
By assumption, $\Delta(R; \phi^t_H, [{\xi}])$ is an integer (multiple of four) and hence not equal to 
the irrational number $\Delta(Q; \phi^t_H, [\xi_{Q}])$. Arguing as in Case 1, we see that 
for sufficiently large $k$ we have
\begin{equation*}
\label{ }
|\Delta(R^{\pm}_j; \psi_{k,t}, [\xi^k_+]) -\Delta(Q^+; \psi_{k,t}, [\xi^k_+])| >3.
\end{equation*}

Finally, we settle  the second assertion of Proposition \ref{not}. If $Q^+$ is a contractible fixed point  of $\psi_k$ than 
$\phi^t_H(Q)$ is contractible in $S^2 \smallsetminus \{P,R\}$. We choose $[\xi]$ in this case so that it is determined by a spanning disc for 
$\phi^t_H(Q)$. Then the induced class $[\xi^k_+]$ determines the canonical grading of $\HF_*(\psi_k)$. As established above, we have
$$
\Delta(Q^+; \psi_{k,t}, [\xi^k_+]) = k\Delta(Q; \phi^t_H, [\xi]).
$$
Since $\Delta(Q; \phi^t_H, [\xi])$ is irrational and hence nonzero,  we therefore have 
$$ |\Delta(Q^+; \psi_{k,t}, [\xi^k_+])| >2$$  for large enough $k \in \N$.
For such $k$, inequality \eqref{closer} then yields $$|\mu(Q^+; \psi_{k,t}, [\xi^k_+])| >1.$$

\end{proof}

Now, Propositions \ref{hom} and \ref{not} can not both be true. The first assertion of Proposition \ref{not} together with 
Proposition \ref{hom} implies that $\psi_k$ must be a Hamiltonian diffeomorphism, in which case $\HF_d(\psi_k;0)$ 
must be  trivial when $|d|>1$. This contradicts the second assertion of Proposition \ref{not}.
Thus, $\phi$ can not have a degenerate fixed point.

\subsection{Path 2: all the $r_j$ are nondegenerate}

To begin we choose, as in Section \ref{staticham}, a generating Hamiltonian $H$ for $\phi$ such that, this time,  
$P$ and $Q$ are static fixed points of $\phi^t_H$. Fix a(n) (arbitrarily large) $k \in \N$ such that the fixed points of $\phi^k$ are 
also all nondegenerate. Following Section \ref{complete}, we can then 
complete the restriction of $\phi^k$ to $S^2 \smallsetminus \{P,Q\}$ to obtain a smooth area preserving map 
$\overline{\phi^k}$ of the closed cylinder $[-1,1] \times \R/2\pi \Z$ which acts on the boundary circles, $\Gamma_P = \{1\} \times  \R/2\pi \Z$ and  $\Gamma_Q = \{-1\} \times  \R/2\pi \Z$, as the (irrational) rotations by  $\pi \Delta(P; \phi^t_{H_k}, [\xi])$ and $\pi \Delta(Q; \phi^t_{H_k}, [\xi])$, respectively, for any choice of the classes $[\xi]$. Moreover, the flow $\phi^t_{H_k}$ again induces an isotopy $\overline{\phi_t^k}$ from the identity to $\overline{\phi^k}$. 

Proceeding as in Section \ref{transfer} we  extend the map $\overline{\phi^k}$ to the torus  formed by gluing together two copies of the domain cylinder  $[-1,1] \times \R/2\pi \Z$  to one another with two narrow  connecting cylinders in between. 
In this way we obtain an area preserving map  $\Psi_k$ of the symplectic torus  $(\T^2, \Omega)$ which agrees with $\overline{\phi^k}$ on the two large cylinders, and is defined on the connecting cylinders so that  the overall map is smooth and has no new fixed points. In particular, $\Fix(\Psi_k)$ consists of  two copies of $\Fix(\overline{\phi^k})$, which we denote by
 \begin{equation*}
\label{ }
\Fix^{\pm}(\overline{\phi^k})=  \{  p^{\pm}_{3} \dots , p^{\pm}_{l} , r^{\pm}_{1},  \dots, r^{\pm}_{m}\} .
\end{equation*}

The isotopy $\overline{\phi^k_t}$ induces a smooth isotopy $\Psi_{k,t}$ from the identity to $\Psi_k$ and so we can 
again consider the Floer homology $\HF(\Psi_k)$. The following result again contradicts Proposition \ref{hom}.

\begin{prop}\label{endpath2}
If $k$ is sufficiently large then no contractible fixed point of $\Psi_k$ has Conley-Zehnder index equal to one, and $r^+_1$ represents a nontrival class in $\HF(\Psi_k)$.
\end{prop}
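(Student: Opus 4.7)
The plan is to compute Conley--Zehnder indices of all fixed points of $\Psi_k$ via the iteration formula, using a trivialization adapted to the cylindrical decomposition of $\T^2$, and then to compare these indices to rule out both a contractible index-one fixed point and the vanishing of $[r_1^+]$.

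Since the open cylinder $S^2\smallsetminus\{P,Q\}$ has trivial tangent bundle, I would fix a symplectic trivialization $\tau$ of $T(S^2\smallsetminus\{P,Q\})$ and set $\alpha_Y:=\Delta(Y;\phi^t_H,[\tau])$ for every fixed point $Y\in\{p_3,\ldots,p_l,r_1,\ldots,r_m\}$ of $\phi$. Via the identification $S^2\smallsetminus\{P,Q\}\cong C^\pm$ used to build $\Psi_k$, the trivialization $\tau$ induces trivialization classes $[\tau^\pm]$ along the orbits $\Psi_{k,t}(Y^\pm)$, which remain in $C^\pm$ because $P$ and $Q$ are static for $\phi^t_H$ and the connecting cylinders contain no periodic points. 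The iteration formula \eqref{iter} then yields
\[
\Delta(Y^\pm;\Psi_{k,t},[\tau^\pm])=k\alpha_Y.
\]
Any two symplectic trivializations along a loop on a surface differ by an even shift of the mean index, so the Path~2 hypotheses translate into $\alpha_{p_j}\in\R\smallsetminus\Q$ for $j=3,\ldots,l$ and $\alpha_{r_j}\in 2\Z$ for $j=1,\ldots,m$.

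For part (a), suppose $Y^+$ is a contractible fixed point of $\Psi_k$ with $\mu(Y^+)=1$. Since the inclusion $\pi_1(C^+)\hookrightarrow\pi_1(\T^2)$ is injective, the orbit bounds a disc inside $C^+$, and since $TC^+$ is trivial the spanning-disc trivialization agrees with $[\tau^+]$ up to homotopy, giving $\Delta(Y^+)=k\alpha_Y$. If $Y=p_j$, then $|k\alpha_{p_j}|\to\infty$ and Lemma \ref{determine} identifies $\mu(Y^+)$ with the odd integer nearest $k\alpha_{p_j}$, whose modulus exceeds $1$ for all sufficiently large $k$. If $Y=r_j$, the Path~2 nondegeneracy makes \eqref{closer} strict, so $\mu(Y^+)$ is the unique integer in $(k\alpha_{r_j}-1,\,k\alpha_{r_j}+1)$, namely the even integer $k\alpha_{r_j}$. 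Both cases contradict $\mu(Y^+)=1$.

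For part (b), let $c\in H_1(\T^2;\Z)$ denote the class of $r_1^+$. Because every orbit of $\Psi_k$ lies entirely in $C^+$ or in $C^-$, the class $c$ belongs to the image of $H_1(C^+)\to H_1(\T^2)$; I would pick a representative of $c$ inside $C^+$ and use $\tau$ to fix a $c$-structure for which $\mu(X^\pm)=k\alpha_X$ for every generator $X^\pm$ of $\CF_*(\Psi_k;c)$. Setting $M:=\mu(r_1^+)=k\alpha_{r_1}\in 2\Z$, I would verify that no other generator lies in degree $M\pm 1$: generators with $X=r_j$ have even index $k\alpha_{r_j}$ and cannot hit the odd value $M\pm 1$, while generators with $X=p_j$ would require $|\alpha_{p_j}-\alpha_{r_1}|<2/k$, which fails for large $k$ because $\alpha_{p_j}$ is a fixed irrational number and $\alpha_{r_1}$ is a fixed integer. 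Hence both $\CF_{M-1}(\Psi_k;c)$ and $\CF_{M+1}(\Psi_k;c)$ vanish, so $\partial_J r_1^+=0$ automatically and $r_1^+$ lies outside the image of $\partial_J$, giving $[r_1^+]\neq 0$ in $\HF_M(\Psi_k;c)\subset\HF(\Psi_k)$.

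The main obstacle I expect is the trivialization bookkeeping: justifying $\Delta(Y^\pm;\Psi_{k,t},[\tau^\pm])=k\alpha_Y$ on the torus, verifying that the spanning-disc trivialization on $\T^2$ coincides with $[\tau^+]$ for contractible orbits, and arranging $\tau^+$ and $\tau^-$ coherently so that a single $c$-structure grades every generator of $\CF_*(\Psi_k;c)$ compatibly with $\tau$ on both copies. Once these identifications are in hand, both assertions reduce to the elementary arithmetic dichotomy between even integers, odd integers, and $k$-multiples of irrational numbers.
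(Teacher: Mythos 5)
Your proposal is correct and follows essentially the same route as the paper: apply the iteration formula \eqref{iter} to make the relevant mean-index gaps grow linearly in $k$, then use Lemma \ref{determine} and the strict form of \eqref{closer} to pin down the Conley--Zehnder indices, ruling out a contractible index-one fixed point and isolating $r_1^+$ in its degree within $\CF_*(\Psi_k;c)$ so that it survives in homology. The only differences are presentational (a fixed global trivialization of the cylinder and a parity argument for the $r_j$'s, versus the paper's spanning-disc trivializations for the first assertion and its ``mean indices equal or differing by more than $3$'' dichotomy for the second), and the trivialization bookkeeping you flag is no more delicate than what the paper itself invokes; just note that in your part (b) the displayed identity should read $\Delta(X^\pm)=k\alpha_X$ rather than $\mu(X^\pm)=k\alpha_X$.
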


\begin{proof}

Let $X^k$ be a contractible fixed point of $\Psi_k$ where $X$ denotes the corresponding fixed point of $\phi$. 
Since $\pi_2(\T^2)$ is trivial, all classes of symplectic trivializations determined
by spanning discs for $\Psi_{k,t}(X^k)$  yield the same values of the mean index and Conley-Zehnder index of $X^k$.
So, in what follows we denote these simply as $\Delta(X^k; \Psi_{k,t})$ and $\mu(X^k; \Psi_{k,t})$. Since $X^k$ is contractible, $X$ must admit a spanning disc with image in $S^2 \smallsetminus \{P,Q\}$. Let $\Delta(X; \phi^t_H)$ denote the mean index computed with respect to the corresponding class of trivializations along $\phi^t_H(X)$. By \eqref{iter} we have 
\begin{equation}
\label{ktimes}
\Delta(X^k; \Psi_{k,t}) = k \Delta(X; \phi^t_H).
\end{equation}

\noindent{\it Case 1: $X^k= p^{\pm}_{j}$.} Since $\Delta(p_j; \phi^t_H)$ is irrational, it follows from \eqref{ktimes} that  for large enough $k$ we have 
\begin{equation*}
\label{ }
|\Delta(p^{\pm}_{j}; \Psi_{k,t})| = k |\Delta(p_j;\phi^t_H)| > 2.
\end{equation*} 
By \eqref{closer} it then follows that for sufficiently large $k$ we have 
\begin{equation*}
\label{up}
|\mu(p^{\pm}_{j}; \Psi_{k,t})| >1.
\end{equation*}

\noindent{\it Case 2: $X^k= r^{\pm}_{j}$.} In this case $\Delta(r_j; \phi^t_H) = 0 \mod 4$. If $\Delta(r_j; \phi^t_H) \neq 0$
then we can argue as in the previous case to show that for sufficiently large $k$ we have 
\begin{equation*}
\label{up}
|\mu(r^{\pm}_{j}); \Psi_{k,t}| >1.
\end{equation*}
Otherwise, it follows from \eqref{ktimes} that $$\Delta(r^{\pm}_{j}; \Psi_{k,t}) = 0.$$ 
Since $r^{\pm}_{j}$ is nondegenerate, the strong form of \eqref{closer} applies and implies that 
\begin{equation*}
\label{down}
\mu(r^{\pm}_{j}; \Psi_{k,t}) = 0.
\end{equation*}
This settles the  first assertion of Proposition \ref{endpath2}. 

To approach the second, we first fix a class $[{\xi}]$ of symplectic trivializations of $TS^2$ along $\phi^t_{H}(r_1)$. This determines an equivalence class $[\xi^k_{+}]$ of symplectic trivializations of $T\T^2$ along $\Psi_{k,t}(r^+_1)$. Let $X^k$ be any fixed point of $\Psi_k$ in the same homotopy class as
$r_1^+$. Arguing as in Proposition \ref{not}, to prove the second assertion of Proposition \ref{endpath2} it suffices to show that for $k$ sufficiently large 
we have either 
\begin{equation*}
\label{ }
\Delta(X^k; \Psi_{k,t}, [\xi^k_+]) = \Delta(r_1^+; \Psi_{k,t}, [\xi^k_+])
\end{equation*}
or 
\begin{equation*}
\label{ }
|\Delta(X^k; \Psi_{k,t}, [\xi^k_+]) - \Delta(r_1^+; \Psi_{k,t}, [\xi^k_+])|>3.\footnote{A very simillar argument to the one which follows appears in \cite{gg3}.}
\end{equation*}

\noindent{\it Case 1: $X^k=p^{\pm}_j$.} By our construction of $\Psi_k$ and  \eqref{iter} we have 
$$
\Delta(r^+_1; \Psi_{k,t}, [\xi^k_+])  =k\Delta(r_1; \phi^t_H, [\xi])
$$
and 
$$
\Delta(p^{\pm}_j; \Psi_{k,t}, [\xi^k_+])  =k\Delta(p_j; \phi^t_H, [\xi]).
$$
Now $\Delta(r_1; \phi^t_H, [\xi]) = 0 \mod 4$ and  $\Delta(p_j; \phi^t_H, [\xi])$ is irrational, so for $k$ sufficiently large we have 
$$
| \Delta(r^+_1; \Psi_{k,t}, [\xi^k_+]) - \Delta(p^{\pm}_j; \Psi_{k,t}, [\xi^k_+])| > 3.
$$

\noindent{\it Case 2:  $X^k=r^{\pm}_j$.} In this case, 
\begin{equation*}
\label{ }
\Delta(r^+_1; \Psi_{k,t}, [\xi^k_+]) - \Delta(r^{\pm}_j; \Psi_{k,t}, [\xi^k_+]) =0 \mod 4.
\end{equation*}
If the mean indices are equal we are done. If not we can argue as in the previous case to show that for large $k$ they differ by more than $3$.

\end{proof}

This leads to the desired contradiction at the end of Path 2 as Proposition \ref{endpath2} contradicts Proposition \ref{hom}. In particular,  the first assertion of Proposition \ref{endpath2}  implies that $\Psi_k$ can not be Hamiltonian and the second assertion of Proposition \ref{endpath2} implies that the Floer homology $\HF(\Psi_k)$ is nontrivial. With this, the proof of Theorem \ref{main} is complete.

\end{document}